\documentclass{elsarticle}

\usepackage{graphicx}   
\usepackage[dvipsnames]{xcolor}
\usepackage{amsmath, amsfonts, amsthm}
\usepackage{tikz}
\usepackage{float}
\graphicspath{{images/}}
\usepackage{amssymb}

\newproof{pf}{Proof}
\newtheorem{definition}{Definition}[section]
\newtheorem{theorem}{Theorem}[section]
\newtheorem{proposition}{Proposition}[section]

\newtheorem{lemma}{Lemma}[theorem]

\definecolor{mario}{rgb}{0.8,0.8,1}

\begin{document}
\begin{frontmatter}

\title{On Two-Player Scalar Discrete-Time Linear Quadratic Games
\tnoteref{t1}
} 

\tnotetext[t1]{This work was funded by the European Union (ERC Advanced Research Grant COMPACT, No. $101141351$). Views and opinions expressed are however those of the authors only and do not necessarily reflect those of the European Union or the European Research Council. Neither the European Union nor the granting authority
can be held responsible for them.}



\author[IMT]{Chiara Cavalagli} 
\author[IMT]{Alberto Bemporad} 
\author[IMT]{Mario Zanon}

\address[IMT]{IMT School for Advanced Studies Lucca}

\begin{abstract} 
For the characterization of Feedback Nash Equilibria (FNE) in linear quadratic games, this paper provides a detailed analysis of the discrete-time discounted coupled best-response equations for the scalar two-player setting, together with a set of analytical tools for the classification of local saddle property for the iterative best-response method. Through analytical and numerical results we show the importance of classification, revealing an anti-coordination scheme in the case of multiple solutions. Particular attention is given to the symmetric case, where identical cost function parameters allow closed-form expressions and explicit necessary and sufficient conditions for the existence and multiplicity of FNE. We also present numerical results that illustrate the theoretical findings and offer foundational insights for the design and validation of iterative NE-seeking methods.

\end{abstract}

\begin{keyword}
Non-cooperative games, Linear quadratic dynamic games, Feedback Nash Equilibria, Saddle Equilibria
\end{keyword}

\end{frontmatter}

\section{Introduction}
Non-cooperative games~\cite{noncooperative} provide a rigorous framework for analyzing strategic interactions among self-interested players and have found widespread applications in economics, robotics, and communication systems. A fundamental solution concept is the Nash equilibrium (NE)~\cite{nash}, which characterizes strategy profiles from which no player can unilaterally improve its outcome.
This paper focuses on a broad and important subclass of non-cooperative dynamic games, namely discrete-time infinite-horizon Linear–Quadratic (LQ) games, which extend the classical Linear Quadratic Regulator problem~\cite{Bellman1957} to multi-player settings. An important set of solutions for this class is the Feedback Nash Equilibrium (FNE), whose computation typically requires solving coupled Riccati equations~\cite{COSTA1995}. Closed-form solutions are rarely available, except under restrictive structural assumptions such as zero-sum games~\cite{noncooperative}. As a result, several iterative methods have been developed to compute or approximate FNE, including Policy Iteration~\cite{policy_iteration}, Policy Gradient methods~\cite{policy_grad,no_conv}, and Reinforcement Learning approaches~\cite{offpolicy_q,model_free_LQ}. However, a key challenge persists even in the most elementary settings such as the case of a scalar state. In particular, the scalar two-player LQ game already exhibits nontrivial equilibrium structures, including multiplicity and sensitivity to initialization, which complicate both the theoretical analysis and algorithmic convergence. Recent works have addressed these issues by characterizing existence and multiplicity conditions, either numerically via Gröbner bases~\cite{Salizzoni_2025_scalar} or geometrically through the analysis of Riccati equations~\cite{Nortmann_2023_two_players,feedbackNE}. These results are crucial for understanding the behavior of iterative NE-seeking methods, which still relies on a majority of numerical-based findings. 

Building on these foundations, this paper advances the theoretical understanding of scalar two-player LQ games with three main contributions. In Section~\ref{sec:mathematical_analysis}, we provide a complete characterization of existence, uniqueness, and multiplicity of stable FNE for the $\gamma$-discounted LG game, a generalization of the well-known structure, identifying explicit parameter regimes in which multiple equilibria arise. In Section~\ref{sec:saddle}, we introduce the notion of a local saddle equilibrium with respect to the best-response operator and derive conditions under which a stable FNE exhibits saddle behavior. This property has important algorithmic implications: as shown analytically and numerically in Section~\ref{sec:value}, socially desirable equilibria systematically correspond to saddle points of the best-response dynamics and are therefore not reachable by those schemes, which instead converge to suboptimal stable solutions. In both Section~\ref{sec:mathematical_analysis} and Section~\ref{sec:saddle}, we give particular emphasis to the symmetric setting, where players share identical cost parameters, and derive closed-form expressions for equilibria together with explicit saddle-characterization results. 

\section{Notation}
We denote by $\mathbb{R}, \mathbb{R}^+, \mathbb{R}^+_0$ respectively the set of real numbers, real non negative numbers, and real positive numbers. For every index player $i$, the notation $-i$ stands for the remaining player; any point $(x_i, x_{-i})=x$ belongs to $\mathbb{R}^2$. Any expression of the form \(f^{(i,-i)}(x_i,x_{-i})\) uses the superscript to indicate
the parameters associated with players \(i\) and \(-i\) (which are fixed),
whereas the arguments \((x_i,x_{-i})\) denote the variables.

\section{Problem Statement}
Let us consider a scalar non-cooperative infinite-horizon two-player game in a discrete-time with a state $s_t\in\mathbb{R}$, and control inputs $u_{i, t}\in\mathbb{R},i=1,2$. The dynamics are desbribed by
\begin{equation}\label{eq:LQ}
        s_{t+1}  = a  s_t + \displaystyle \sum_{i=1}^{2} u_{i,t}\ ,
\end{equation}
where $a\in\mathbb{R}$ is the system parameter characterizing the linear dynamics. We focus our attention on a specific class of feedback policies defined as follows.\\
\emph{Assumption 1:} Each player applies a linear state feedback policy described by
    \begin{equation}\label{eq:linear_policy}
    u_{i,t} = x_is_t\ ,
\end{equation}
where $x_i\in\mathbb{R}$ is the policy of agent $i$. For every fixed vector of policies $x=(x_1, x_2)$, we denote $a_\mathrm{cl}(x)=a  + \sum_{i=1}^{2} x_i$. In a standard setting, each control input in~\eqref{eq:LQ} would be multiplied by an additional term $b_i\in\mathbb{R}_0$ which we eliminate without loss of generality, as also discussed in~\cite{Salizzoni_2025_scalar}. Such a type of dynamical system~\eqref{eq:LQ} is a special case of a category of games called Linear Quadratic (LQ) games~\cite{noncooperative} and it represents a natural extension to a multi-player setting of the Linear Quadratic Regulator (LQR)~\cite{Bellman1957}. The local objective function of each player $i$ is defined as
\begin{equation}\label{eq:cost_j}
J_i(s_t, u_{i,t})  = \displaystyle \displaystyle \sum_{k=t}^{\infty} \gamma^k r_i\left[ \sigma_i s_k^2 + u_{i,k}^2 \right],
\end{equation}
with cost weights $\sigma_i\in\mathbb{R}^+, r_i\in\mathbb{R}^+_0$ and $\gamma \in(0,1]$ a discount factor, a standard setting in dynamic processes \cite{putermann}. As one can observe, 
the coupling among the agents is implicitly expressed in the dynamic evolution \eqref{eq:LQ}. Note that each player observes the shared global state $s_t$ and locally applies its control $u_{i,t}$ with the aim to minimize its local objective function, without the need to know the other input. Let us rewrite the evolution of the state of system~\eqref{eq:LQ} in a closed-loop setting through the recursive and explicit relations, respectively
\begin{equation}
    s_{t+1} =a_\mathrm{cl}(x)s_t,\quad s_t=a_\mathrm{cl}(x)^ts_0,\quad \forall t\in\mathbb{R}^+.
\end{equation}
Thus, we can rewrite the performance function~\eqref{eq:cost_j} of each player $i$ in terms of the joint policies and the initial state $s_0$ through the value function 
\begin{equation}\label{eq:value_function_0}
    V^{(i)}(s_0; x_i,x_{-i})=  s_0^2r_i\left[ \sigma_i + x^2_i  \right]\displaystyle \sum_{t=0}^{\infty}\left( \gamma a_{\mathrm{cl}}^2(x)\right)^t .
\end{equation}
The solution of the LG game is called a Feedback-Nash-Equilibrium (FNE); we are interested in the stable ones, formally defined as follows. 

\begin{definition}[Stable Feedback-Nash-Equilibrium] \label{def:stable_fne}
    Given a LG game of two players, consider the joint vector of strategies $x^\star=(x^\star_1,x^\star_2)$, where $x^\star_i\in\mathbb{R}$ is the linear feedback policy~\eqref{eq:linear_policy} of player $i$, and $V^{(i)}$ is the local quadratic value function. We say that $x^\star$
    \begin{enumerate}
        \item is a FNE of the system if, for all $s_0$ and for each player's policy $x_i$
        \begin{equation}
            V^{\star(i)}:=V^{(i)}(s_0;x^\star_i,x^\star_{-i})\leq V^{(i)}(s_0;x_i,x^\star_{-i});
    \end{equation}
    \item is a stable FNE of the system if it is a FNE and, in addition:
        \begin{equation}\label{eq:stability_oc}
            |a_{\mathrm{cl}}(x^\star)|<1.
        \end{equation}
    \end{enumerate}
\end{definition}
It is important to remark that this paper aims at studying the mathematical properties of a specific kind of solution. While it is true that non linear controllers might exist~\cite{noncooperative}, the need to explore them  falls out of the scope of this paper.

Let us consider a fixed stable joint policy vector $x=(x_1, x_2)$, an initial state $s_0$ and the corresponding value function of the $i$-th player defined in~\eqref{eq:value_function_0}. From its expression we can observe that the infinite sum is of the type of $\displaystyle \sum_{t=0}^\infty y^t=\frac{1}{1-y}$ and it converges if and only if $|y|<1$. Here, the argument is $y=\gamma a_{\mathrm{cl}}^2(x)$ where $0<\gamma\leq1$ and $|a_{\mathrm{cl}}(x)|<1$ by hypothesis. Thus, the value function becomes
\begin{equation}\label{eq:value_function}
    V^{(i)}(s_0, x)=\frac{s_0^2r_i\left( \sigma_i + x^2_i  \right)}{1-\gamma a_{\mathrm{cl}}^2(x)},
\end{equation}
from which we can easily translate the stable FNE Definition~\eqref{def:stable_fne} as $\frac{\partial V^{(i)} }{\partial x_i}=0$ for each player $i$. By computing the latter for each player and by restricting to the stability property \eqref{eq:stability_oc}, we obtain the mathematical characterization of all stable FNE of the game
\begin{equation}\label{eq:best_response}
\begin{cases}
    f^{(i)}(x_i, x_{-i})=0\\
    f^{(-i)}(x_{-i}, x_i)=0\\
    |a_\mathrm{cl}(x)| < 1 
\end{cases},
\end{equation}
where
\begin{equation*}
\begin{aligned}
    f^{(i)} (x, y)=&\  -\gamma x^{2} \left(a  + y\right) + \gamma\sigma_i \left( a +y \right)\\
    & - x \left( \gamma \left(a+y\right)^2 - \gamma \sigma_{i} - 1\right)
\end{aligned}
\end{equation*}
as derived in~\cite{Salizzoni_2025_scalar}. Both players expressions are coupled and their roots yield the \emph{best-response} with respect to the other player's policy, i.e., the policy of player $i$ which is optimal for a fixed policy of player $-i$. If both players' policies are best-responses, then we obtain a feedback Nash equilibrium~\cite{nash}. 

While several numerical methods are able to find solutions of~\eqref{eq:best_response}, mathematical findings on solutions and tighter conditions on multiplicity are still an open topic. In the next section, we derive necessary and sufficient conditions on the game parameters for the existence of a FNE, together with a geometric and analytical characterization of the equilibria. In addition to that, we will focus on a special setting of the game that admits closed-form solutions, from which we obtain explicit conditions for uniqueness and multiplicity.

\section{Mathematical Analysis of the Game}\label{sec:mathematical_analysis}
Starting from the geometrical characterization of the best-response mappings~\eqref{eq:best_response}, and the domain of existence of stable FNE, we want to derive conditions of uniqueness and multiplicity. 

We start by proving that the only case in which the players admit the null vector as stable FNE is corresponding to the case of null system dynamics. Thanks to that, we exclude this trivial case in the remainder of the paper by assuming $a\neq 0$, such that $x_i\neq0,\ \forall i$. 
\begin{lemma}\label{lemma:exclude_null}
    Let $(x_i^\star, x_{-i}^\star)$ be a stable FNE. Then
    \begin{equation*}
       \forall\ i:\ x_i^\star=0\ \iff\ a= 0
    \end{equation*}
\end{lemma}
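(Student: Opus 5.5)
The plan is to prove the two implications separately, working directly with the stationarity conditions $\partial V^{(i)}/\partial x_i=0$ from which \eqref{eq:best_response} was derived. The direction ($\Rightarrow$) should follow by substitution into $f^{(i)}$. The direction ($\Leftarrow$) needs a structural rewriting of the best response as a contraction-type relation.

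\emph{($\Rightarrow$).} Suppose $x_1^\star=x_2^\star=0$. Substituting $x=y=0$ into $f^{(i)}$ kills the quadratic and the linear terms in $x$, and leaves $f^{(i)}(0,0)=\gamma\sigma_i a$. Since $(0,0)$ solves \eqref{eq:best_response}, this quantity must vanish. Because $\gamma>0$ and $\sigma_i>0$, we conclude $a=0$.

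\emph{($\Leftarrow$).} Assume $a=0$ and let $x^\star$ be a stable FNE. Rather than manipulating the cubic polynomial $f^{(i)}$ directly, I would go back to the value function \eqref{eq:value_function}. Write $c_i:=a+x_{-i}^\star$ and
\[
P_i:=\frac{\sigma_i+(x_i^\star)^2}{1-\gamma a_{\mathrm{cl}}^2(x^\star)}.
\]
Stability $|a_{\mathrm{cl}}(x^\star)|<1$ and $\gamma\le 1$ give $P_i>0$, using $\sigma_i>0$. Differentiating $V^{(i)}$ with respect to $x_i$ and setting the derivative to zero gives
\[
2x_i\bigl(1-\gamma a_{\mathrm{cl}}^2\bigr)+2\gamma a_{\mathrm{cl}}\bigl(\sigma_i+x_i^2\bigr)=0 .
\]
Dividing by $2\bigl(1-\gamma a_{\mathrm{cl}}^2\bigr)>0$, this becomes $x_i^\star+\gamma P_i(c_i+x_i^\star)=0$, that is,
\[
x_i^\star=-\kappa_i\,(a+x_{-i}^\star),\qquad \kappa_i:=\frac{\gamma P_i}{1+\gamma P_i}\in(0,1).
\]
This is the familiar scalar LQR gain structure.

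With $a=0$ the two relations read $x_1^\star=-\kappa_1x_2^\star$ and $x_2^\star=-\kappa_2x_1^\star$. Combining them gives $x_1^\star=\kappa_1\kappa_2\,x_1^\star$. Since $0<\kappa_1\kappa_2<1$, this forces $x_1^\star=0$, and then $x_2^\star=0$.

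The main obstacle is the ($\Leftarrow$) direction. Working with $f^{(i)}$ alone, one would have to exclude nonzero stable roots of a coupled pair of cubics. The key step is therefore the rewriting of stationarity as $x_i=-\kappa_i(a+x_{-i})$ with $\kappa_i\in(0,1)$. It is important that $\kappa_i$ depends on the equilibrium itself, but only its range matters. The point to check carefully is that the stability condition \eqref{eq:stability_oc} is exactly what guarantees $P_i>0$, and hence $\kappa_i<1$.
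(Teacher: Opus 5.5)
Your argument is correct, and for ($\Leftarrow$) it takes a genuinely different route from the paper. The paper stays with the polynomial system at $a=0$. It subtracts the two equations to get $x_{-i}=kx_i$ with $k=\frac{1+D}{1-D}$ and $D:=\gamma(\sigma_i-\sigma_{-i})$, then computes the nonzero candidates explicitly and checks that they violate $|a_{\mathrm{cl}}|<1$. You use the stability hypothesis first. Your relation $x_i^\star=-\kappa_i(a+x_{-i}^\star)$ is exactly $f^{(i)}=0$ divided by $1-\gamma a_{\mathrm{cl}}^2>0$, and the contraction $\kappa_1\kappa_2<1$ rules out every nonzero solution at once, without ever locating the unstable ones.

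This buys more than brevity. Once $x_{-i}=kx_i$ is imposed, the two equations coincide, since their difference is precisely the relation that defines $k$. So the paper's two expressions for $x_i^2$ agree for all parameters and do not force $\sigma_i=\sigma_{-i}$. In fact nonzero stationary pairs exist whenever $|D|<1$, and the paper only checks the symmetric one. These pairs satisfy $\gamma a_{\mathrm{cl}}^2=2(1+\gamma\sigma_i+\gamma\sigma_{-i})/(1-D^2)>1$, so the lemma still holds. Your route covers the asymmetric case with no extra work, and it only needs $\gamma a_{\mathrm{cl}}^2<1$. What the paper's route offers in exchange is explicit knowledge of the unstable equilibria at $a=0$.

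One thing to tighten is ($\Rightarrow$). You assume both components vanish. The lemma is meant per component, and it is later used that way (``$a\neq 0$, such that $x_i\neq 0,\ \forall i$''): a single $x_i^\star=0$ must already force $a=0$. The paper gets this from $f^{(i)}(0,x_{-i})=\gamma\sigma_i(a+x_{-i})$, which gives $x_{-i}^\star=-a$, and then $f^{(-i)}(-a,0)=-a$. Your relations give it directly. Solving the two contraction equations yields
\[
x_i^\star=-\frac{\kappa_i(1-\kappa_{-i})}{1-\kappa_i\kappa_{-i}}\,a ,
\]
with coefficient in $(0,1)$. Hence $x_i^\star=0\iff a=0$ for each $i$ separately, and for $a\neq 0$ you also recover Proposition~\ref{prop:existence}.
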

\begin{pf}
    The proof is given in Appendix~\ref{appendix:exclude_null}. 
\end{pf}
In the next Lemma, we show that the best-response function of each player $i$ is factorized via two roots, and that each of them is linear with respect to the other player's policy $x_{-i}$. This property introduces a structure that allows us to study the solutions more easily.
\begin{lemma}\label{lemma:factorization_sys}
    Let $f^{(i)}(x_i, x_{-i})$ be the best-response of the $i$-th agent. Then
\begin{equation}
    f^{(i)}(x_i, x_{-i})=-\gamma x_i(x_{-i}-h^{(i)}_-(x_i))(x_{-i}-h^{(i)}_+(x_i)),
    \label{eq:f_factorization}
\end{equation}
where 
\begin{equation}\label{eq:roots}
\begin{aligned}
        \displaystyle h^{(i)}_{\pm}(x_i)=&\frac{\sigma_i-x_i(2a+x_i)}{2x_i} \pm\frac{\sqrt{\gamma^2(\sigma_i+x_i^2)^2+4\gamma x_i^2}}{2\gamma x_i},
\end{aligned}
\end{equation}
are the two roots of $f^{(i)}(x_i, x_{-i})$ w.r.t. $x_{-i}$. 
\end{lemma}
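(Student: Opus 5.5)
The plan is to regard $f^{(i)}(x_i,x_{-i})$ as a polynomial in the single variable $x_{-i}$, with $x_i$ treated as a fixed nonzero parameter, and to factor it through its roots. By Lemma~\ref{lemma:exclude_null} and the standing assumption $a\neq 0$, any stable FNE has $x_i\neq 0$. This justifies dividing by $x_i$ throughout, and it is the only genuine hypothesis needed.

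First I substitute $z=a+x_{-i}$ and collect powers of $z$ in the definition of $f^{(i)}$:
\begin{equation*}
f^{(i)} = -\gamma x_i z^2 + \gamma(\sigma_i - x_i^2)\, z + x_i(\gamma\sigma_i + 1).
\end{equation*}
This is a quadratic in $z$, hence also in $x_{-i}$, since the shift $z=a+x_{-i}$ is affine with unit slope. Its leading coefficient is $-\gamma x_i\neq 0$.

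Next I apply the quadratic formula in $z$:
\begin{equation*}
z_\pm=\frac{\gamma(\sigma_i-x_i^2)\pm\sqrt{\Delta}}{2\gamma x_i},
\qquad \Delta=\gamma^2(\sigma_i-x_i^2)^2+4\gamma^2x_i^2\sigma_i+4\gamma x_i^2 .
\end{equation*}
The key algebraic step is the simplification of the discriminant. Since $(\sigma_i-x_i^2)^2+4x_i^2\sigma_i=(\sigma_i+x_i^2)^2$, we get
\begin{equation*}
\Delta=\gamma^2(\sigma_i+x_i^2)^2+4\gamma x_i^2,
\end{equation*}
which is exactly the radicand in \eqref{eq:roots}. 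Because $\gamma>0$, $\Delta$ is strictly positive, so both roots are real and distinct. This is worth stating explicitly because it guarantees the factorization is over $\mathbb{R}$.

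Returning to $x_{-i}=z-a$, the rational part of the roots becomes
\begin{equation*}
\frac{\sigma_i-x_i^2}{2x_i}-a=\frac{\sigma_i-x_i(2a+x_i)}{2x_i},
\end{equation*}
which gives $h^{(i)}_\pm(x_i)$ as in \eqref{eq:roots}. A real quadratic equals its leading coefficient times the product of the linear factors at its roots, so
\begin{equation*}
f^{(i)}(x_i,x_{-i})=-\gamma x_i\,\bigl(x_{-i}-h^{(i)}_-(x_i)\bigr)\bigl(x_{-i}-h^{(i)}_+(x_i)\bigr),
\end{equation*}
which is \eqref{eq:f_factorization}.

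There is no real obstacle here. The only steps needing care are the discriminant identity and the bookkeeping of the shift by $a$. As a cross-check, I would verify Vieta's relations, namely that $h^{(i)}_++h^{(i)}_-$ and $h^{(i)}_+h^{(i)}_-$ agree with the coefficients of $f^{(i)}$ in $x_{-i}$. Finally, I note that the formula is undefined at $x_i=0$, and this case is exactly the one excluded by Lemma~\ref{lemma:exclude_null}.
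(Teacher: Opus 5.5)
Your proof is correct and is the routine computation the paper has in mind (the paper states the lemma without proof, implicitly as the quadratic formula applied to $f^{(i)}$ in $x_{-i}$); the shift $z=a+x_{-i}$, the identity $(\sigma_i-x_i^2)^2+4\sigma_i x_i^2=(\sigma_i+x_i^2)^2$, and the Vieta cross-check you propose all go through. One small correction: the factorization is a pointwise identity valid for every $x_i\neq 0$ and every $x_{-i}$, so the only hypothesis you need is $x_i\neq 0$ (where $h^{(i)}_\pm$ are defined and the leading coefficient $-\gamma x_i$ is nonzero); Lemma~\ref{lemma:exclude_null} is not the right justification, since it concerns stable FNE only.
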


Given that $f^{(i)}$ can always be factorized as in~\eqref{eq:f_factorization}, we focus next on the relation between each root and the stability range~\eqref{eq:stability_oc}. In fact, it has been shown via different approaches (e.g. in the un-discounted setting~\cite{Salizzoni_2025_scalar},~\cite{Nortmann_2023_two_players}) how stable FNE satisfy only \emph{one of the two roots}. In the next proposition, we prove the latter for the discounted case through a simple yet powerful direct approach.
\begin{proposition}[Necessary Condition]\label{prop:ONLY_ONE_ROOT}
Let $x^\star=(x_i^\star, x_{-i}^\star)$ be a FNE. If $x^\star$ is a stable FNE then 
    \begin{equation}\label{eq:necessary_condition}
        x_{-i}^\star =h^{(i)}_-(x_i^\star)\quad \text{and } \quad  x_i^\star=h_-^{(-i)}(x_{-i}^\star). 
    \end{equation}
\end{proposition}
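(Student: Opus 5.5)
The plan is to use the factorization of Lemma~\ref{lemma:factorization_sys} directly. Since $x^\star$ is a FNE, $f^{(i)}(x_i^\star,x_{-i}^\star)=0$. We assume $a\neq 0$, as agreed after Lemma~\ref{lemma:exclude_null}, so that lemma gives $x_i^\star\neq 0$. The factor $-\gamma x_i^\star$ in \eqref{eq:f_factorization} is then nonzero, and $x_{-i}^\star$ must equal either $h^{(i)}_-(x_i^\star)$ or $h^{(i)}_+(x_i^\star)$. It therefore suffices to show that the choice $x_{-i}^\star=h^{(i)}_+(x_i^\star)$ always violates the stability condition \eqref{eq:stability_oc}. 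Exchanging the roles of $i$ and $-i$ then gives the second identity $x_i^\star=h^{(-i)}_-(x_{-i}^\star)$.

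The key computation is the closed-loop coefficient along each root branch. For $x:=x_i\neq 0$, write $D(x):=\sqrt{\gamma^2(\sigma_i+x^2)^2+4\gamma x^2}$. Substituting \eqref{eq:roots} into $a_{\mathrm{cl}}=a+x+h^{(i)}_\pm(x)$, the terms in $a$ and $x$ should cancel:
\begin{equation*}
a+x+\frac{\sigma_i-2ax-x^2}{2x}=\frac{\sigma_i+x^2}{2x}.
\end{equation*}
This yields
\begin{equation*}
a_{\mathrm{cl}}\bigl(x,h^{(i)}_\pm(x)\bigr)=\frac{\gamma(\sigma_i+x^2)\pm D(x)}{2\gamma x}.
\end{equation*}
The identity holds for either sign of $x$, so no case split on the sign of $x_i^\star$ is needed. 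This is the only algebraic step, and I expect it to be routine.

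For the $+$ branch the numerator is positive, so
\begin{equation*}
\bigl|a_{\mathrm{cl}}\bigr|=\frac{\gamma(\sigma_i+x^2)+D(x)}{2\gamma|x|}.
\end{equation*}
The main (though mild) point is to bound $D(x)$ from below. Since $0<\gamma\le 1$, we have $4\gamma x^2\ge 4\gamma^2x^2$, hence $D(x)\ge 2\gamma|x|$. Moreover $\gamma(\sigma_i+x^2)>0$ because $\sigma_i>0$. Together these give $|a_{\mathrm{cl}}|>1$, which contradicts \eqref{eq:stability_oc}. So a stable FNE cannot lie on the $h^{(i)}_+$ branch and must satisfy $x_{-i}^\star=h^{(i)}_-(x_i^\star)$.

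The discount factor is essential here: the bound uses $\gamma\le 1$. This is also where a careful treatment of the degenerate case is needed. If $x_i^\star=0$, then Lemma~\ref{lemma:exclude_null} forces $a=0$, which is excluded, so the division by $x$ in \eqref{eq:roots} is legitimate throughout.
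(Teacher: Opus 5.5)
Your proof is correct and follows the same route as the paper: factor $f^{(i)}$ via Lemma~\ref{lemma:factorization_sys}, then show that $|a+x_i+h^{(i)}_+(x_i)|>1$ for every $x_i\neq 0$, so a stable FNE can only lie on the $h^{(i)}_-$ branch. The differences are minor. The paper bounds this closed-loop coefficient by locating its extremum at $x_i=\pm\sqrt{\sigma_i}$, whereas your elementary estimate $D(x)\ge 2\gamma|x|$ gives the strict bound directly. You also make explicit two points the paper leaves implicit: the reduction to $x_i^\star\neq 0$ via Lemma~\ref{lemma:exclude_null}, and the simplification $a+x+h^{(i)}_\pm(x)=\frac{\gamma(\sigma_i+x^2)\pm D(x)}{2\gamma x}$.
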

\begin{pf}
See Appendix~\eqref{appendix:ONLY_ONE_ROOT}. 
\end{pf}
The proposition provides a necessary condition of the characterization of every stable FNE. To prove the opposite implication, one shall demonstrate that every pair $(h_-^{(i)}(x_i), h_-^{(-i)}(x_{-i}))$ is a stable FNE. However, such a statement is not true without conditions on the game parameter $a, \gamma$ and $\sigma_i$. This means that the best-response mapping may lead to possible unstable FNE. 

\begin{figure*}[t!]
    \begin{center}    
    \includegraphics[width=\linewidth]{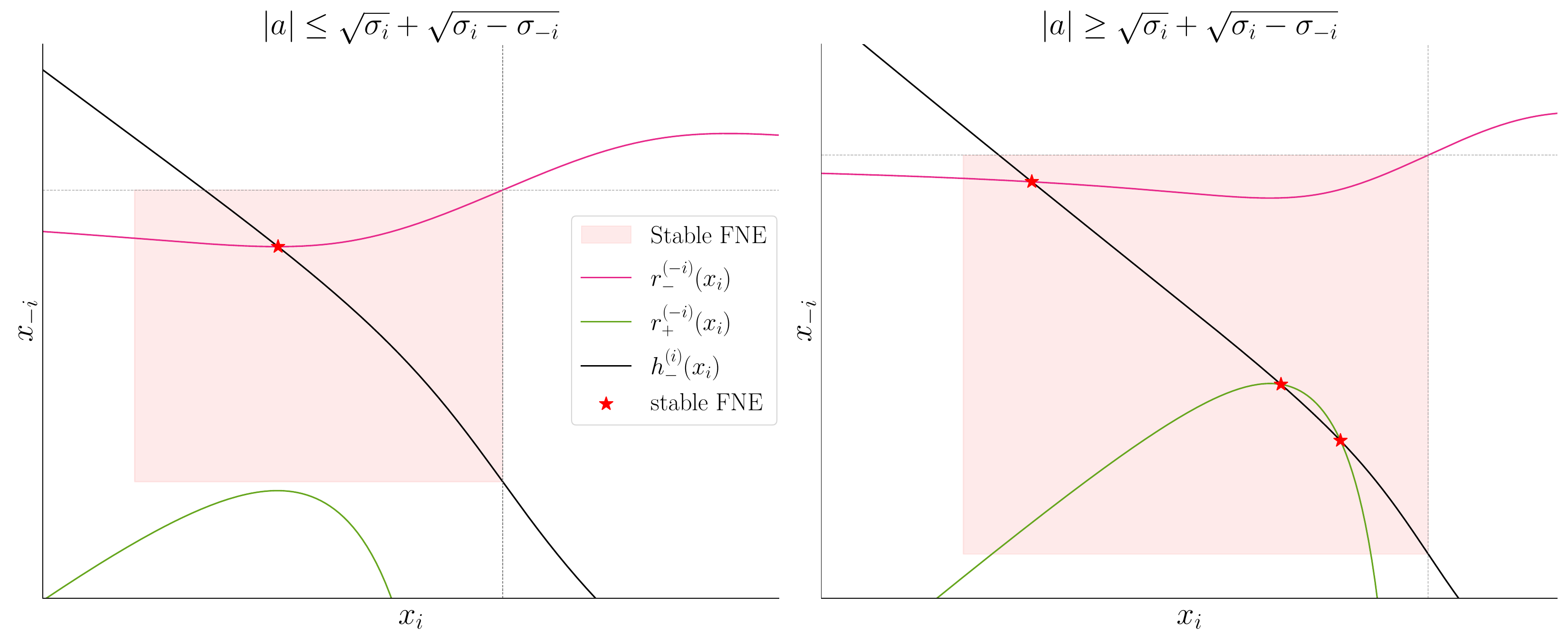}
    \caption{Two cases in which we have respectively unique and multiple stable FNE given $\sigma_i$ fixed. Each stable FNE is the intersection between the curve $x_{-i}=h^{(i)}_-(x_i)$ and the FNE law $x_{-i}=r^{(-i)}_\pm(x_i)$. }
    \label{fig:suff_cond}
    \end{center}
\end{figure*}
\begin{proposition}[Sufficient Condition]\label{prop:SUFF_ONLY_ONE_ROOT}
Let $x^\star=(x_i^\star, x_{-i}^\star)$ be a FNE. Let $i$ be such that $\sigma_i=\displaystyle\min_{1,2}{\sigma_j}$, then 
    \begin{equation*}
        x_i^\star=h_-^{(-i)}(x_{-i}^\star)\quad \text{and}\quad \displaystyle \sigma_i>\frac{(1-\gamma)^2}{4\gamma^2}
    \end{equation*}
    then $x^\star$ is a stable FNE. 
\end{proposition}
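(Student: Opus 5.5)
The plan is to compute the closed-loop gain $a_{\mathrm{cl}}(x^\star)$ explicitly along the branch $x_i^\star=h_-^{(-i)}(x_{-i}^\star)$. The key observation I expect is that the parameter $a$ cancels, so that $a_{\mathrm{cl}}$ becomes a function of $x_{-i}^\star$, $\sigma_{-i}$ and $\gamma$ only. Stability then reduces to a one-variable inequality, and the discount-dependent bound on $\sigma$ should be exactly the condition that this inequality hold for every value of the variable.

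\textbf{Step 1: closed form of $a_{\mathrm{cl}}$.} Set $y:=x_{-i}^\star$ and $S:=\sigma_{-i}+y^2>0$. By Lemma~\ref{lemma:exclude_null}, with $a\neq 0$ we have $y\neq 0$, so the formula~\eqref{eq:roots} for $h_-^{(-i)}(y)$ is well defined. Substituting it into $a_{\mathrm{cl}}(x^\star)=a+y+h_-^{(-i)}(y)$, the terms $a$ and $-2ay/(2y)$ cancel, and $y-y^2/(2y)=y/2$ combines with $\sigma_{-i}/(2y)$. This should give
\begin{equation*}
a_{\mathrm{cl}}(x^\star)=\frac{\gamma S-\sqrt{\gamma^2S^2+4\gamma y^2}}{2\gamma y}.
\end{equation*}
Rationalizing the numerator then yields
\begin{equation*}
|a_{\mathrm{cl}}(x^\star)|=\frac{2|y|}{\gamma S+\sqrt{\gamma^2S^2+4\gamma y^2}}.
\end{equation*}

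\textbf{Step 2: reduce $|a_{\mathrm{cl}}|<1$ to a quadratic inequality.} The condition $|a_{\mathrm{cl}}|<1$ is equivalent to $\sqrt{\gamma^2S^2+4\gamma y^2}>2|y|-\gamma S$. I split into two cases.
\begin{itemize}
\item If $2|y|\le\gamma S$, the inequality holds trivially, since the left side is positive.
\item Otherwise both sides are positive and I may square. The $\gamma^2S^2$ terms cancel, leaving $4y^2(1-\gamma)<4\gamma|y|S$. Dividing by $4|y|>0$ gives $(1-\gamma)|y|<\gamma(\sigma_{-i}+y^2)$.
\end{itemize}
This last condition is implied in all cases by positivity of the quadratic
\begin{equation*}
q(t)=\gamma t^2-(1-\gamma)t+\gamma\sigma_{-i},\qquad t=|y|>0.
\end{equation*}

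\textbf{Step 3: discriminant condition.} Since $\gamma>0$, the quadratic $q$ is strictly positive for every real $t$ whenever its discriminant is negative, that is, whenever $(1-\gamma)^2-4\gamma^2\sigma_{-i}<0$, or equivalently $\sigma_{-i}>(1-\gamma)^2/(4\gamma^2)$. Because $\sigma_i=\min_j\sigma_j\le\sigma_{-i}$, the hypothesis $\sigma_i>(1-\gamma)^2/(4\gamma^2)$ gives this bound for $\sigma_{-i}$. Hence $|a_{\mathrm{cl}}(x^\star)|<1$.

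Since $x^\star$ is a FNE by assumption, this stability is exactly the condition~\eqref{eq:stability_oc}, and the proof is complete. The proof does not need the other relation $x_{-i}^\star=h_-^{(i)}(x_i^\star)$. The stated requirement that $\sigma_i$ be the minimum is only there to make the hypothesis valid for either player.

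The main obstacle is Step 1: the algebraic simplification must be carried out carefully so that $a$ indeed disappears and the sign of $y$ is handled correctly when taking absolute values. Once that closed form is in hand, Steps 2 and 3 are elementary.
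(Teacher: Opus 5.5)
Your proof is correct. Its first half matches the paper's: both compute the closed-loop gain along the $h_-$ branch and rationalize it to $-2y/\bigl(\gamma S+\sqrt{\gamma^2S^2+4\gamma y^2}\bigr)$, in which $a$ cancels. The two proofs then bound this quantity by one differently.

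The paper notes that the expression vanishes at $0$ and $\pm\infty$, places its extrema at $\pm\sqrt{\sigma_i}$, and evaluates it there. You instead square once and reduce $|a_{\mathrm{cl}}|<1$ to positivity of $q(t)=\gamma t^2-(1-\gamma)t+\gamma\sigma_{-i}$, so the threshold appears as a discriminant condition and no maximizer has to be located. Your version is more self-contained, because in the paper the location of the extremum is asserted rather than derived. (It follows by writing the gain as $2/\bigl(\gamma u+\sqrt{\gamma^2u^2+4\gamma}\bigr)$ with $u=|y|+\sigma_{-i}/|y|\ge 2\sqrt{\sigma_{-i}}$.) The paper's route, once completed, does give the exact worst-case gain $1/\bigl(\gamma\sqrt{\sigma_{-i}}+\sqrt{\gamma^2\sigma_{-i}+\gamma}\bigr)$, attained at $|y|=\sqrt{\sigma_{-i}}$.

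Your index bookkeeping is also cleaner. You bound the branch $h_-^{(-i)}$ that actually appears in the hypothesis, which is governed by $\sigma_{-i}$, and use $\sigma_i=\min_j\sigma_j$ only to transfer the bound. The appendix instead bounds $a+x_i+h_-^{(i)}(x_i)$ with $\sigma_i$ and then appeals to the necessary condition.

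One formal point: Lemma~\ref{lemma:exclude_null} is stated for stable FNE, so invoking it to get $y\neq0$ before stability is known is circular as written. This is harmless, since the ($\Rightarrow$) part of its proof uses only the two best-response equations, not stability. The paper's standing assumption $x_i\neq0$ covers it anyway.
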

\begin{pf}
The proof is given in Appendix\eqref{appendix:SUFF_ONLY_ONE_ROOT}
\end{pf}
It is worth mentioning that Proposition~\ref{prop:ONLY_ONE_ROOT} and Proposition~\ref{prop:SUFF_ONLY_ONE_ROOT} are necessary and sufficient for the un-discounted case $\gamma=1$, which has already been covered in the literature~\cite{Salizzoni_2025_scalar}. To the best of our knowledge, the case $\gamma<1$ has not been covered yet. 

Thus, using the equations of the reduced optimal system
\begin{equation}\label{eq:optimal_system}
        \begin{cases}
        x_i^\star=h_-^{(-i)}(x_{-i}^\star)\\
        x_{-i}^\star =h^{(i)}_-(x_i^\star) ,        
        \end{cases}
\end{equation}
one can define the domain of existence of every stable FNE. 
\begin{proposition}\label{prop:existence}
    Let $(x_i^\star, x_{-i}^\star)$ be a stable FNE. Then
    \begin{equation}\label{eq:sFNE_dom}
        x_i^\star\in(\min\{-a,0\}, \max\{-a, 0\}), \ \forall i.
    \end{equation}
\end{proposition}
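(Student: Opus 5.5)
The plan is to avoid working with the explicit roots $h^{(i)}_\pm$ and instead rewrite the first-order conditions in a Riccati-like form. Fix a stable FNE $x^\star$ and write $a_\mathrm{cl}=a_\mathrm{cl}(x^\star)$. Following the convention adopted after Lemma~\ref{lemma:exclude_null}, I assume $a\neq 0$; for $a=0$ the interval in \eqref{eq:sFNE_dom} is empty and the lemma gives $x_i^\star=0$. For each player define
\begin{equation*}
p_i:=\frac{\sigma_i+(x_i^\star)^2}{1-\gamma a_\mathrm{cl}^2}.
\end{equation*}
This quantity is well defined and strictly positive, because stability \eqref{eq:stability_oc} together with $\gamma\le 1$ gives $1-\gamma a_\mathrm{cl}^2>0$, and $\sigma_i>0$. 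Up to the factor $s_0^2 r_i$, it is exactly the value coefficient in \eqref{eq:value_function}.

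The first step is to differentiate \eqref{eq:value_function} with respect to $x_i$, using $\partial a_\mathrm{cl}/\partial x_i=1$. Setting the derivative to zero gives
\begin{equation*}
2x_i^\star(1-\gamma a_\mathrm{cl}^2)+2\gamma a_\mathrm{cl}\bigl(\sigma_i+(x_i^\star)^2\bigr)=0.
\end{equation*}
Dividing by $1-\gamma a_\mathrm{cl}^2>0$ yields the compact relation
\begin{equation*}
x_i^\star=-\gamma p_i\, a_\mathrm{cl}\quad\text{for each } i.
\end{equation*}
One should check that this is equivalent to $f^{(i)}=0$ in \eqref{eq:best_response} on the stable region, but it follows directly from the stationarity characterization stated just before \eqref{eq:best_response}. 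This step is the only one needing care: one must make sure the division is legitimate, which is exactly where stability enters.

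The second step substitutes these relations into $a_\mathrm{cl}=a+x_i^\star+x_{-i}^\star$. This gives $a=a_\mathrm{cl}\bigl(1+\gamma(p_i+p_{-i})\bigr)$, hence
\begin{equation*}
a_\mathrm{cl}=\frac{a}{1+\gamma(p_i+p_{-i})}
\quad\text{and}\quad
x_i^\star=-\frac{\gamma p_i}{1+\gamma p_i+\gamma p_{-i}}\;a .
\end{equation*}
Since $p_i,p_{-i}>0$, the coefficient $\lambda_i:=\gamma p_i/(1+\gamma p_i+\gamma p_{-i})$ lies strictly in $(0,1)$. Therefore $x_i^\star=-\lambda_i a$ lies strictly between $-a$ and $0$, which is precisely \eqref{eq:sFNE_dom}. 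Note that Proposition~\ref{prop:ONLY_ONE_ROOT} is not even needed; the argument uses only stationarity and stability, so I expect no real obstacle beyond the positivity of $p_i$.
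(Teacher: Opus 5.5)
Your proof is correct, and it takes a genuinely different route from the paper's.

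\textbf{The paper's route.} The paper works with the explicit branch $h_-^{(i)}$. It invokes Proposition~\ref{prop:ONLY_ONE_ROOT} to discard $h_+^{(i)}$ and computes $h_-^{\prime(i)}$ to get strict monotonicity. It then rationalizes $h_-^{(i)}$ to obtain the intercepts $h_-^{(i)}(0)=-a$ and a unique zero $\overline{x}_-(\sigma_i)$ between $-a$ and $0$; this needs a squaring step with sign bookkeeping. Finally it concludes that the two best-response curves can only meet inside the square spanned by these intercepts.

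\textbf{Your route.} You bypass the root formulas entirely. You write stationarity, which indeed expands to exactly $f^{(i)}=0$, as $x_i^\star=-\gamma p_i a_\mathrm{cl}$ with $p_i>0$ the normalized value coefficient. Closing the loop through $a_\mathrm{cl}=a+x_i^\star+x_{-i}^\star$ then gives $x_i^\star=-\lambda_i a$ with an explicit $\lambda_i\in(0,1)$.

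\textbf{What your approach buys.}
\begin{itemize}
\item It takes a few lines instead of a page.
\item It needs neither Proposition~\ref{prop:ONLY_ONE_ROOT} nor Lemma~\ref{lemma:exclude_null}. In fact, $\lambda_i>0$ gives $x_i^\star=0\iff a=0$ at once.
\item It uses only $\gamma a_\mathrm{cl}^2<1$, so it covers every FNE with finite discounted cost, not just those with $|a_\mathrm{cl}|<1$.
\item It proves more than the statement. Since $\lambda_i+\lambda_{-i}<1$, $x^\star$ lies in the open triangle $|x_i^\star|+|x_{-i}^\star|<|a|$, and $a_\mathrm{cl}$ has the sign of $a$ with $|a_\mathrm{cl}|<|a|$. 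The square domain \eqref{eq:sFNE_dom} alone does not give this. The paper later uses it, e.g.\ when asserting $a_\mathrm{cl}(x_\mathrm{s})=a+2x_\mathrm{s}>0$ in the proof of Theorem~\ref{prop:value_func_order}.
\item It is more airtight. Monotonicity plus axis intercepts do not by themselves exclude the two curves crossing outside the square. For $a>0$, both curves enter the region $\{x_i>0,\ x_{-i}<-a\}$. The paper's last step therefore needs an extra sign argument, which your identity makes unnecessary.
\end{itemize}

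\textbf{What the paper's approach buys.} It yields geometric information about the best-response curve $h_-^{(i)}$ itself, namely its monotonicity and intercepts. That picture underlies the later IVT-based existence and uniqueness arguments, and your proof does not supply it.
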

\begin{pf}
See Appendix~\eqref{appendix:existence}
\end{pf}
We continue the characterization of FNE by taking the factorized optimal system~\eqref{eq:optimal_system}, which can be solved analytically by removing the square roots in~\eqref{eq:roots}. After a few algebraic manipulations, we obtain a condition that characterizes stable and unstable FNE:
\begin{equation}\label{eq:FNE_law}
    p^{(i, -i)}(x_i,x_{-i}):=\frac{\sigma_i}{x_i}+x_i-\left(\frac{\sigma_{-i}}{x_{-i}}+x_{-i}\right)=0,
\end{equation}
by recalling that every feedback policy is non-null by assumption - Lemma~\ref{lemma:exclude_null}. The next result shows the geometrical properties of the feedback laws. 
\begin{lemma}[Geometric structure of the FNE law]\label{lemma:facto_FNE_law}
    Let us consider the FNE curve in the plane $(x_i, x_{-i})$ defined by~\eqref{eq:FNE_law}. Then for every ordering $\sigma_i>\sigma_{-i}$ the curve factorizes into two monotone branches, denoted as $r^{(i,-i)}_-(x_i)$ and $r^{(i,-i)}_+(x_i)$, which are respectively bounded and unbounded on their admissible domain. 
\end{lemma}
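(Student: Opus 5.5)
Multiplying the FNE law \eqref{eq:FNE_law} by $x_{-i}\neq 0$ (Lemma~\ref{lemma:exclude_null}) turns it into a quadratic in $x_{-i}$ with coefficients depending on $x_i$. Setting $g_i(x_i):=\frac{\sigma_i}{x_i}+x_i$, the curve becomes
\begin{equation*}
x_{-i}^2 - g_i(x_i)\,x_{-i} + \sigma_{-i}=0 ,
\end{equation*}
so it splits into two branches
\begin{equation*}
r^{(i,-i)}_\pm(x_i)=\frac{g_i(x_i)\pm\sqrt{g_i(x_i)^2-4\sigma_{-i}}}{2}.
\end{equation*}
The product of the two roots is $r_+r_-=\sigma_{-i}$, and their sum is $g_i(x_i)$. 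These two identities drive the whole argument.

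First I determine the admissible domain, which is the set where the discriminant is nonnegative: $g_i(x_i)^2\ge 4\sigma_{-i}$. On each half-line, $|g_i|$ attains its minimum $2\sqrt{\sigma_i}$ at $|x_i|=\sqrt{\sigma_i}$. Since $\sigma_i>\sigma_{-i}$, we have $2\sqrt{\sigma_i}>2\sqrt{\sigma_{-i}}$, so the discriminant is strictly positive on all of $\mathbb{R}\setminus\{0\}$. Hence both branches are real, distinct and smooth on the whole punctured line. The ordering hypothesis is what removes branch crossings; in the symmetric case the branches would touch at $x_i=\pm\sqrt{\sigma_i}$. By the oddness of $g_i$, the branches on $x_i<0$ are reflections of those on $x_i>0$, so it suffices to study $x_i>0$.

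Next I treat boundedness. On $x_i>0$ we have $g_i\to+\infty$ as $x_i\to0^+$ and as $x_i\to+\infty$, so the larger root $r_+\sim g_i$ is unbounded at both ends. The smaller root $r_-=\sigma_{-i}/r_+$ stays in $(0,\sqrt{\sigma_{-i}})$, tends to $0$ at both ends, and is therefore bounded, with $|r_-|<\sqrt{\sigma_{-i}}$ throughout.

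Finally, monotonicity, which I expect to be the main obstacle. Viewed as functions of $x_i$, the branches cannot be globally monotone on $(0,\infty)$, because $g_i$ decreases then increases. So "monotone" has to be read in the right variables. I would argue that the map $x_{-i}\mapsto g_{-i}(x_{-i})$ is strictly monotone on $(0,\sqrt{\sigma_{-i}})$ and on $(\sqrt{\sigma_{-i}},\infty)$, and that $r_-$ and $r_+$ lie respectively in these two intervals. Each branch is then the inverse-image curve $x_{-i}=g_{-i}|^{-1}(g_i(x_i))$, which is a composition of monotone pieces. Its derivative $r_\pm'=g_i'(x_i)/g_{-i}'(r_\pm)$ has sign determined by $g_i'$ together with the fixed sign of $g_{-i}'$ on each interval: $r_-$ is decreasing where $g_i$ increases, and $r_+$ is increasing there. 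This gives piecewise monotonicity on each sub-interval split at $\pm\sqrt{\sigma_i}$, and I would state the lemma's monotonicity in exactly that sense. If the paper intends monotonicity restricted to the stable domain \eqref{eq:sFNE_dom}, I would additionally intersect with $(\min\{-a,0\},\max\{-a,0\})$ to sharpen this.
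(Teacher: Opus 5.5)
Your argument is correct, and its core matches the paper's. The paper also solves the quadratic in $x_{-i}$, and its rationalized form $r^{(i,-i)}_-(x_i)=2\sigma_{-i}x_i/\bigl(\sigma_i+x_i^2+\sqrt{(\sigma_i+x_i^2)^2-4\sigma_{-i}x_i^2}\bigr)$ is exactly your identity $r_-=\sigma_{-i}/r_+$. So the boundedness and unboundedness parts are the same.

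Where you differ is the shape analysis. The paper takes limits at $0$ and $\pm\infty$ and finds the critical points $x_i=\pm\sqrt{\sigma_i}$ with a symbolic calculator. It then evaluates the extremal values $\pm\bigl(\sqrt{\sigma_i}-\sqrt{\sigma_i-\sigma_{-i}}\bigr)$ for $r_-$ and $\pm\bigl(\sqrt{\sigma_i}+\sqrt{\sigma_i-\sigma_{-i}}\bigr)$ for $r_+$.

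You instead prove by hand that the discriminant is positive on all of $\mathbb{R}\setminus\{0\}$. This is the one place where $\sigma_i>\sigma_{-i}$ is genuinely needed, and the paper leaves it implicit. You then rewrite the law as $g_i(x_i)=g_{-i}(x_{-i})$ and read off the sign of $r_\pm'=g_i'(x_i)/g_{-i}'(r_\pm)$ from the separation $r_-<\sqrt{\sigma_{-i}}<r_+$. This is self-contained and explains why the turning points sit at $\pm\sqrt{\sigma_i}$, namely the zeros of $g_i'$.

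Your remark that the branches are only piecewise monotone is right. It agrees with the paper's own computation of interior extrema, so ``monotone'' in the statement can only be meant in your sense.

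What the paper's route delivers explicitly is the sharp bound \eqref{eq:branches_FNE}, $|r_+^{(i,-i)}|\ge\sqrt{\sigma_i}+\sqrt{\sigma_i-\sigma_{-i}}$, together with $|r_-^{(i,-i)}|\le\sqrt{\sigma_i}-\sqrt{\sigma_i-\sigma_{-i}}$, which is sharper than your $\sqrt{\sigma_{-i}}$. Theorem~\ref{thm:suff_multi} uses this later. You get it immediately by evaluating at the turning point $x_i=\sqrt{\sigma_i}$, where $g_i=2\sqrt{\sigma_i}$, and you should record it.

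One labeling slip. With $r_\pm=\tfrac12\bigl(g_i\pm\sqrt{g_i^2-4\sigma_{-i}}\bigr)$ the roles swap for $x_i<0$, because there $g_i<0$ and the $+$ root has the small modulus. For example, $\sigma_i=2$, $\sigma_{-i}=1$, $x_i=-1$ gives $r_+\approx-0.38$ and $r_-\approx-2.62$. Your reflection is really $r_\pm(-x_i)=-r_\mp(x_i)$. Label the branches by modulus, or use the paper's form with denominator $2x_i$. Then $r_-^{(i,-i)}$ is a single odd function, bounded on the whole line, and it extends smoothly through $x_i=0$.
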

\begin{pf}
    The explicit factorization and proof are reported in Appendix~\eqref{appendix:facto_FNE_law}.
\end{pf}
Lemma~\ref{lemma:facto_FNE_law} shows that the FNE law defines two distinct curves in the policy plane: a bounded branch and an unbounded one. As shown in Figure~\ref{fig:suff_cond}, only the bounded branch always intersects the stable domain, while whether the unbounded branch intersects it depends on the game parameters. This geometric separation is the key to prove uniqueness and multiplicity of stable FNE.  

\begin{theorem}[Uniqueness and Multiplicity]\label{thm:suff_multi}
Let us consider a LG game as defined in Section~\ref{sec:mathematical_analysis}. Then 
     \begin{itemize}
    \item  the game always admits a unique stable FNE $(x_i^\star, x_{-i}^\star)$ that satisfies the bounded FNE law branch $r_-^{(i,-i)}$;
    \item the game may admit multiple solutions only if
    \begin{equation}\label{eq:NEC_MULT}
        |a|\geq \sqrt{\sigma_{i}}+ \sqrt{\sigma_i-\sigma_{-i}},\quad\sigma_i>\sigma_{-i}.
    \end{equation}
    \end{itemize}
\end{theorem}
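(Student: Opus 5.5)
The plan is to reduce everything to the scalar functions $g_j(x):=\sigma_j/x+x$, so that the FNE law \eqref{eq:FNE_law} reads $g_i(x_i)=g_{-i}(x_{-i})$. By Proposition~\ref{prop:ONLY_ONE_ROOT} every stable FNE solves the reduced system \eqref{eq:optimal_system}, and hence lies on the FNE curve. By Proposition~\ref{prop:existence} it also lies in the open box $B=(\min\{-a,0\},\max\{-a,0\})^2$, and by Lemma~\ref{lemma:exclude_null} we may take $a\neq 0$. The map $(x,a)\mapsto(-x,-a)$ leaves $f^{(i)}$ invariant up to sign, so without loss of generality $a>0$ and $B=(-a,0)^2$. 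On $(-\infty,0)$ each $g_j$ increases on $(-\infty,-\sqrt{\sigma_j}]$ up to its maximum $-2\sqrt{\sigma_j}$, and then decreases to $-\infty$ as $x\to 0^-$.

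\textbf{Explicit branches.} With $\sigma_i>\sigma_{-i}$, fix $x_i$ and set $c=g_i(x_i)\le -2\sqrt{\sigma_i}<-2\sqrt{\sigma_{-i}}$. Solving $y^2-cy+\sigma_{-i}=0$ gives the two real roots
\[
r^{(i,-i)}_\pm(x_i)=\tfrac12\bigl(c\mp\sqrt{c^2-4\sigma_{-i}}\bigr),
\]
labelled so that $r_-\in(-\sqrt{\sigma_{-i}},0)$ is the bounded branch and $r_+\le -\sqrt{\sigma_{-i}}$ is the unbounded one; this matches Lemma~\ref{lemma:facto_FNE_law}. A stable FNE is then an intersection, inside $B$, of the curve $x_{-i}=h^{(i)}_-(x_i)$ with one of these branches, as in Figure~\ref{fig:suff_cond}.

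\textbf{First bullet (bounded branch).} I would consider
\[
\phi(x_i)=h^{(i)}_-(x_i)-r^{(i,-i)}_-(x_i), \qquad x_i\in(-a,0).
\]
Existence follows from the intermediate value theorem once the signs of $\phi$ at the two ends are fixed. As $x_i\to 0^-$, $c\to-\infty$, so $r_-\to 0^-$; meanwhile $h^{(i)}_-$ is computed from \eqref{eq:roots}, where the $\sigma_i/(2x_i)$ term and the square-root term must be compared carefully. At $x_i=-a$ one checks the sign using $h^{(i)}_-(-a)$ and $a_\mathrm{cl}$. Uniqueness needs $\phi$ to be strictly monotone. The branch $r_-$ is monotone piecewise, since it is a composition of monotone maps. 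The key estimate is that $h^{(i)}_-$ is monotone on the box, with slope of the opposite sign or larger magnitude. If that fails globally, the fallback is to use the implicit equation $f^{(i)}=0$ to show that any two crossings would produce a third, contradicting the root count of the polynomial obtained by eliminating square roots.

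\textbf{Second bullet (necessary condition).} A stable FNE other than the one above must lie on $r_+$ with both coordinates in $(-a,0)$. On $r_+$ the quantity $|y|$ is minimized when $c$ is as large as possible, that is $c=-2\sqrt{\sigma_i}$ at $x_i=-\sqrt{\sigma_i}$. There
\[
y=-\sqrt{\sigma_i}-\sqrt{\sigma_i-\sigma_{-i}},
\]
so for every point of $r_+$ we have $|x_{-i}|\ge\sqrt{\sigma_i}+\sqrt{\sigma_i-\sigma_{-i}}$. Membership in $B$ forces $|x_{-i}|<|a|$, which yields \eqref{eq:NEC_MULT}. By the first bullet every stable FNE off $r_+$ is the unique one, so multiplicity is impossible whenever \eqref{eq:NEC_MULT} fails. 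The reverse labelling $\sigma_{-i}>\sigma_i$ is handled by swapping roles, and for $\sigma_i=\sigma_{-i}$ the unbounded branch degenerates to the reflection $x_ix_{-i}=\sigma_i$, which is treated separately.

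The main obstacle is the uniqueness part of the first bullet, namely the sign and monotonicity analysis of $h^{(i)}_-$ against $r_-$ on $(-a,0)$. The second bullet is an elementary extremal computation.
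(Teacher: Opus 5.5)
\textbf{The gap is uniqueness on the bounded branch.} Your outline follows the paper's own route. Your $\phi$ is $-F$ with $F=r^{(i,-i)}_- - h^{(i)}_-$. Existence comes from the intermediate value theorem on $(-a,0)$ and uniqueness from strict monotonicity. The second bullet comes from the extremal value $-(\sqrt{\sigma_i}+\sqrt{\sigma_i-\sigma_{-i}})$ of $r^{(i,-i)}_+$ on $\mathbb{R}^-$. That second bullet is correct as you wrote it, and it even gives a strict inequality.

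The uniqueness half of the first bullet, which is the core of the theorem, is not proved. You state what is needed, namely that $h^{(i)}_-$ dominates $r^{(i,-i)}_-$ in slope where both decrease, but you never establish it. Your fallback would not work. Eliminating the square roots merges $h^{(i)}_-$ with $h^{(i)}_+$ and $r^{(i,-i)}_-$ with $r^{(i,-i)}_+$. The resulting polynomial therefore also vanishes at equilibria on the unbounded branch, and there are two of those in $(-a,0)$ exactly in the multiplicity regime. A root count cannot rule out a second crossing with $r^{(i,-i)}_-$.

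You also leave the endpoint signs open, though they are easy:
\begin{itemize}
\item $\phi(0^-)=-a$, because $h^{(i)}_-(0^-)=-a$;
\item $\phi(-a)>0$, because $h^{(i)}_-(-a)$ equals the closed-loop value $a_{\mathrm{cl}}>0$ while $r^{(i,-i)}_-<0$.
\end{itemize}

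\textbf{How to close it.} The paper differentiates explicitly: $F'(x_i)=N(x_i)/(2x_i^2\sqrt{m_1^{(i)}m_2^{(i,-i)}})$ with $N=(\sigma_i^2-x_i^4)(\sqrt{m_1^{(i)}}-\gamma\sqrt{m_2^{(i,-i)}})+2x_i^2\sqrt{m_1^{(i)}m_2^{(i,-i)}}$. This is positive term by term for $|x_i|\le\sqrt{\sigma_i}$. For $|x_i|>\sqrt{\sigma_i}$ it is handled via $m_1^{(i)}-\gamma^2 m_2^{(i,-i)}=4\gamma x_i^2(1+\gamma\sigma_{-i})$.

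Within your own splitting the missing estimate is also short. One computes $h^{\prime(i)}_-(x)=-\frac{\sigma_i+x^2}{2x^2}(1+\frac{\gamma(x^2-\sigma_i)}{\sqrt{m_1^{(i)}}})<0$ and $r^{\prime(i,-i)}_-(x)=\frac{x^2-\sigma_i}{2x^2}(1-\frac{x^2+\sigma_i}{\sqrt{m_2^{(i,-i)}}})$.
\begin{itemize}
\item On $(-\sqrt{\sigma_i},0)$ the two slopes have opposite signs, so $\phi$ is strictly decreasing there.
\item For $x^2>\sigma_i$, write $m_2^{(i,-i)}=(x^2-\sigma_i)^2+4x^2(\sigma_i-\sigma_{-i})$, so $\sqrt{m_2^{(i,-i)}}>x^2-\sigma_i$. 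This gives $|r^{\prime(i,-i)}_-|<\sigma_i/x^2<(\sigma_i+x^2)/(2x^2)\le|h^{\prime(i)}_-|$.
\end{itemize}

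\textbf{Stability of the crossing is also unchecked.} The IVT crossing is an FNE. Indeed $f^{(j)}(x_j,x_{-j})=\gamma a_{\mathrm{cl}}(\sigma_j+x_j^2)-x_j(\gamma a_{\mathrm{cl}}^2-1)$ for both $j$, so $f^{(i)}=0$ together with the FNE law gives $f^{(-i)}=0$. But its stability $|a_{\mathrm{cl}}|<1$ does not follow from lying in the box when $\gamma<1$. It needs a condition like that of Proposition~\ref{prop:SUFF_ONLY_ONE_ROOT}, and neither your plan nor the paper's proof invokes one. For example, take $\gamma=1/2$, $\sigma_1=\sigma_2=0.01$, $a\approx1.395$. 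The reduced system then has the single solution $x_1=x_2=-0.05$, with $a_{\mathrm{cl}}\approx1.29$. So in this regime the first bullet, as stated, cannot be proved by either argument.
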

\begin{pf}(Sketch).
We exploit the reduced system \eqref{eq:optimal_system} and intersect it with the bounded branch of the FNE law which returns a one-variable equation whose roots are a stable FNE. The function has discordant signs on the stable domain \eqref{eq:sFNE_dom} which guarantees existence of solutions thanks to the Intermediate Value Theorem (IVT). Moreover, because it is strictly monotone, uniqueness of solution is ensured. As a consequence, multiplicity can only happen if the remaining branch is intersected; a necessary condition is obtained if the maximum/minimum of the branch function enters the stability domain \eqref{eq:sFNE_dom}, which is encoded by the inequality of the thesis. The full proof is given in Appendix \eqref{appendix:suff_multi}. 
\end{pf}
While it has been shown in~\cite{feedbackNE} that open-loop unstable systems $|a|\gg1$ admit three FNE, Theorem~\ref{thm:suff_multi} provides a precise characterization to the unique stable FNE and a necessary condition on $a$ for which the game may admit more than one stable FNE. It does not only contribute to the extension of the theory regarding discrete-time LQ games, but it provides a solid base for the $N$-player setting study as well as a baseline for higher dimensional study cases. 

In the next Section we focus on a special setting of the game, for which we can prove a much richer set of results regarding the nature of multiple stable FNE and their closed-form expressions. 

\subsection{Symmetric Setting}
Let us consider the special case
\begin{equation}\label{eq:symmetric_setting}
    \sigma_i=\sigma_{-i}=\sigma\in\mathbb{R}^+_0\ \ \  \forall\ i,
\end{equation}
which we denote as \emph{symmetric setting}, where we drop the index $i$ as the players play with the same cost on the state. Note that the game does not change its original configuration in terms of objective functions: it is still a non-cooperative game, as each player want to minimize only their corresponding input. This setting is often encountered in energy and transportation problems with $N$ players. By restricting to the symmetric case, we are able to provide stronger results than in Section~\ref{sec:mathematical_analysis}.
\begin{figure*}[t!]
    \includegraphics[width=\textwidth]{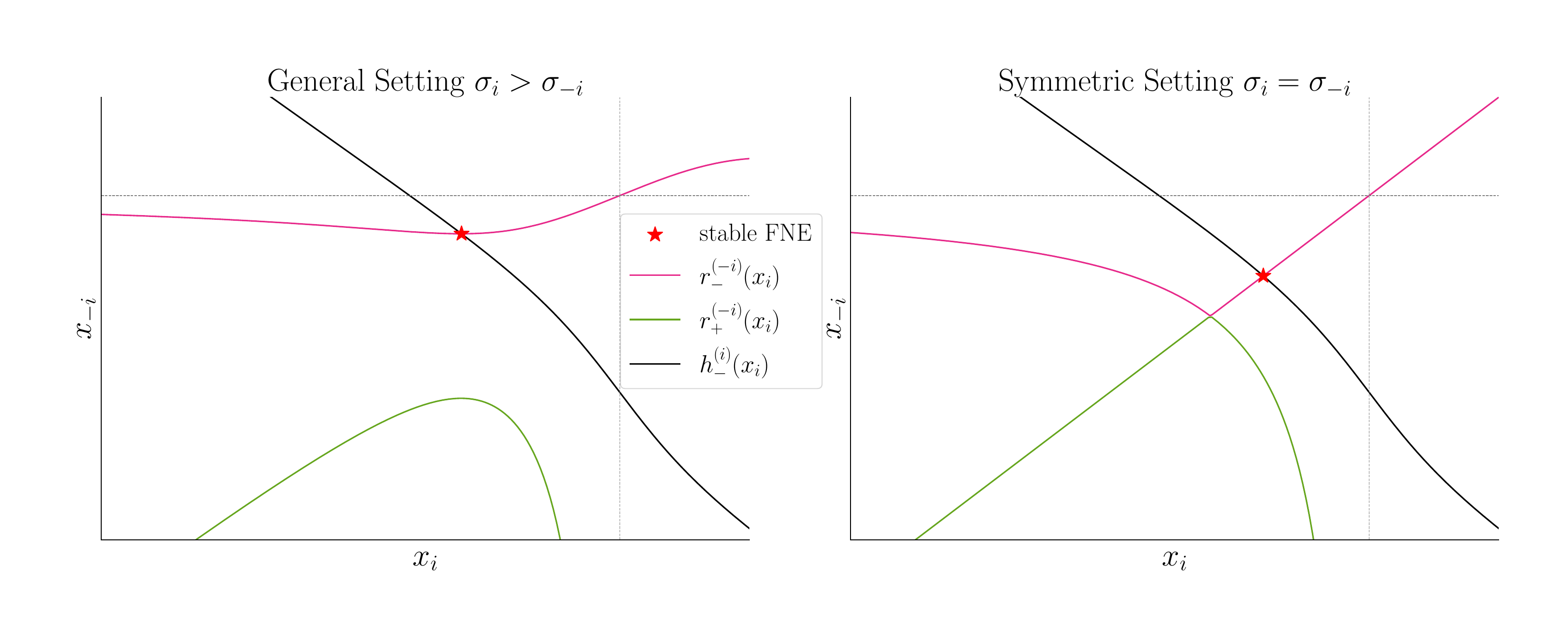}
    \caption{Factorization of the FNE law in standard and symmetric setting. In the second case the two curves $r^{(-i)}(x_i)_\pm$ contribute equally to the hyperbolic and symmetric union. }
    \label{fig:factorization}
\end{figure*}
Considering that the theory developed so far is still valid, for $\sigma_i=\sigma_{-i}$~\eqref{eq:FNE_law} simplifies to 
\begin{equation*}
    p(x_i,x_{-i})= (x_j-x_i)(\sigma-x_ix_j)=0.
\end{equation*}
The expression states that the game admits two kinds of well-known structured FNE: a symmetric one $x_i=x_{-i}$, and/or a hyperbolic one $x_i=\frac{\sigma}{x_{-i}}$. This largely simplifies the study of the conditions of existence and multiplicity. Let us take both expressions, substitute them inside the stable FNE curve of the optimal system~\eqref{eq:optimal_system}, and let us consider solely the numerator as we are interested into the roots of the expression; through simple algebraic manipulations we get two optimal branches, that we call respectively {\bf{symmetric branch}}
\begin{equation}\label{eq:symm_branch}
 - \sigma \gamma + \gamma x_i (2 a + 3 x_i) + \sqrt{\gamma^{2}(\sigma + x_i^2)^2
      + 4 \gamma x_i^{2}}=0,
\end{equation}
and {\bf{hyperbolic branch}}
\begin{equation}\label{eq:hyperb_branch}
\sigma \gamma + \gamma x_i \left(2 a + x_i\right) + \sqrt{\gamma^{2}(\sigma+x_i^2)^2 + 4 \gamma x_i^{2}}=0.
\end{equation}
Each stable FNE must satisfy one of the two expressions. The following Theorem extends the results of Section~\ref{sec:mathematical_analysis} regarding the uniqueness and existence of FNE, providing a closed-form expression. 
\begin{theorem}\label{thm:symmetric_unique_symm_FNE}
The symmetric LQ game~\eqref{eq:symmetric_setting} always admits a unique stable symmetric FNE $(x_\mathrm{s}, x_\mathrm{s})$ where
    \begin{equation*}
        \displaystyle x_\mathrm{s}=- \frac{S \omega}{3} - \frac{a}{2} - \frac{\Delta}{3 S\omega},
    \end{equation*}
with 
\begin{subequations}
    \label{eq:symm_FNE_te}
    \begin{align}
                \Delta &= \frac{3 a^{2}}{4} + 3 \sigma + \frac{3}{2 \gamma}, \\
                S &=
                \frac{1}{2} \sqrt[3]{\frac{27a}{\gamma}
                 + 3\sqrt{3} i
                \sqrt{
                \frac{\left(\gamma(a^{2}+4\sigma)+2\right)^{3}-27a^{2}\gamma}{\gamma^{3}}
                } }, \\
                \omega &=  -0.5 - \frac{\sqrt{3} i}{2}.
        \end{align}
\end{subequations}
\end{theorem}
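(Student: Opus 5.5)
The plan is to reduce the symmetric equilibrium condition to a single cubic in $x_i$, locate its roots relative to the domain of Proposition~\ref{prop:existence}, and read off the closed form via Cardano. Throughout, take $a\neq 0$ as Lemma~\ref{lemma:exclude_null} allows.

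\emph{Step 1 (the cubic).} Substituting $x_i=x_{-i}=x$ into $f^{(i)}(x,x)=0$ and expanding gives, after collecting terms,
\begin{equation*}
g(x):=2\gamma x^3+3\gamma a x^2+(\gamma a^2-2\gamma\sigma-1)x-\gamma\sigma a=0 .
\end{equation*}
This should coincide, after squaring and discarding the spurious sign, with the numerator of the symmetric branch~\eqref{eq:symm_branch}. By Proposition~\ref{prop:ONLY_ONE_ROOT}, every stable symmetric FNE is a root of $g$ lying on the $h_-$ root. By Proposition~\ref{prop:existence}, it lies in $I=(\min\{-a,0\},\max\{-a,0\})$.

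\emph{Step 2 (root location).} Direct evaluation gives
\begin{equation*}
g(0)=-\gamma\sigma a, \qquad g(-a)=a(1+\gamma\sigma).
\end{equation*}
These have opposite signs, so the intermediate value theorem gives a root in $I$. Now fix $a>0$; the case $a<0$ follows by the symmetry $x\mapsto -x$, $a\mapsto -a$. Since $g(-\infty)<0<g(-a)$, $g(0)<0<g(+\infty)$ and $g$ is cubic, the three real roots lie one in each of $(-\infty,-a)$, $(-a,0)$ and $(0,\infty)$. Hence exactly one root lies in $I$, which gives uniqueness.

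\emph{Step 3 (FNE and stability).} It remains to show that this root is a genuine stable FNE, i.e.\ that it satisfies~\eqref{eq:symm_branch} with the correct sign of the square root and that $|a+2x_\mathrm{s}|<1$. For the sign, observe that $x\in I$ makes $\sigma\gamma-\gamma x(2a+3x)$ nonnegative, and check this against the squared equation. For stability, use $f=0$ in the form
\begin{equation*}
x\bigl(\gamma(a+x)^2-\gamma\sigma-1\bigr)=\gamma(a+x)(\sigma-x^2),
\end{equation*}
combined with the sign information $x(a+x)<0$ on $I$, to bound $\gamma a_\mathrm{cl}^2<1$. Alternatively, invoke Proposition~\ref{prop:SUFF_ONLY_ONE_ROOT} or the first bullet of Theorem~\ref{thm:suff_multi}: the symmetric line is the bounded branch $r_-$ in the symmetric limit, whose intersection was shown there to be the stable one. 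I expect this step to be the main obstacle, since membership in $I$ alone does not force $|a_\mathrm{cl}|<1$.

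\emph{Step 4 (closed form).} Set $x=t-a/2$. The resulting depressed cubic is $t^3+pt+q=0$ with $p=-a^2/4-\sigma-1/(2\gamma)=-\Delta/3$ and $q$ proportional to $a/\gamma$. Cardano's formula gives $t=-\,S\omega^k/3-\Delta/(3S\omega^k)$ up to normalisation, with $S$ as in~\eqref{eq:symm_FNE_te}. Because $p<0$, the discriminant is positive, which is the expression under the inner root in $S$; this is the casus irreducibilis, with three real roots. The remaining task is to pick the cube root of unity $\omega^k$ that yields the middle root found in Step 2. I would check that the choice $\omega=-1/2-\sqrt3 i/2$ corresponds, in the trigonometric form $t=2\sqrt{\Delta/9}\cos(\theta/3-2\pi k/3)$, to the root lying between the other two.
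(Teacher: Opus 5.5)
The gap is in your Step 3 (stability). As the theorem is stated, it cannot be closed without an extra hypothesis on $\sigma$, and the paper's own proof uses that hypothesis tacitly.

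\textbf{What works.} Your Steps 1, 2 and 4 are correct. Step 2 is a simpler uniqueness argument than the paper's. The paper squares \eqref{eq:symm_branch}, splits on $|a|\lessgtr\sqrt\sigma$ and proves strict monotonicity of the root functional $W_-$. You instead read off, for $a>0$, one root of $g$ in each of $(-\infty,-a)$, $(-a,0)$, $(0,\infty)$ from signs, and Proposition~\ref{prop:existence} finishes. In Step 4, $p<0$ alone does not make the discriminant positive; the three real roots come from Step 2. Your $\omega$-check does go through: with the principal cube root, $|S|=\sqrt\Delta$ and $x_\mathrm{s}+\tfrac a2=-\tfrac{2\sqrt\Delta}{3}\cos\bigl(\tfrac\phi3-\tfrac{2\pi}{3}\bigr)$ with $\phi=\arg S^3\in(0,\pi)$, which is the middle root.

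\textbf{The sign claim in Step 3 is false.} On $I$, $\sigma-2ax-3x^2\ge0$ holds only for $|x|\le(|a|+\sqrt{a^2+3\sigma})/3$. That is a proper subset of $I$ when $|a|>\sqrt\sigma$. The branch at the root follows instead from the identity
\begin{equation*}
g(x)=\gamma x\,q_x(a+2x),\qquad q_x(z):=z^2-\frac{\sigma+x^2}{x}\,z-\frac{1}{\gamma}.
\end{equation*}
The roots of $q_x$ are $z_\pm(x)=a+x+h_\pm(x)$, and they satisfy $z_+z_-=-1/\gamma$ and $|z_+|>1/\sqrt\gamma$. For $a>0$ and $x\in(-a,0)$ one has $z_+(x)<-(\sigma+x^2)/|x|<-|x|<a+2x$. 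Hence $x_\mathrm{s}=h_-(x_\mathrm{s})$ and $\gamma a_\mathrm{cl}^2<1$.

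\textbf{Neither of your routes reaches $|a_\mathrm{cl}|<1$.} The bound $\gamma a_\mathrm{cl}^2<1$ is the most your first route can give, and it is strictly weaker than $|a_\mathrm{cl}|<1$ when $\gamma<1$. Your second route also fails. For $\sigma_i=\sigma_{-i}$ one has $r_-(x)=x$ only for $|x|<\sqrt\sigma$, and $r_-(x)=\sigma/x$ beyond. So by Lemma~\ref{lemma:symm_sqrt_d} the symmetric FNE lies on $r_+$ once $|a|\ge\sqrt\sigma+\sqrt{\sigma+1/\gamma}$, and Theorem~\ref{thm:suff_multi} in any case assumes $\sigma_i>\sigma_{-i}$. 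Proposition~\ref{prop:SUFF_ONLY_ONE_ROOT} needs $\sigma>(1-\gamma)^2/(4\gamma^2)$.

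\textbf{That hypothesis cannot be dropped.} Take $a=\sqrt\sigma+\sqrt{\sigma+1/\gamma}$. Then:
\begin{itemize}
\item $g(-\sqrt\sigma)=\sqrt\sigma\,(1+2\gamma a\sqrt\sigma-\gamma a^2)=0$, and $-\sqrt\sigma\in(-a,0)$;
\item at this root $a_\mathrm{cl}=\sqrt{\sigma+1/\gamma}-\sqrt\sigma$, which is $\ge1$ exactly when $\sigma\le(1-\gamma)^2/(4\gamma^2)$ (e.g.\ $\gamma=1/2$, $\sigma=0.01$ gives $a_\mathrm{cl}\approx1.318$);
\item the other two roots of $g$ have $|a_\mathrm{cl}|>a>1$.
\end{itemize}
So in that regime there is no stable symmetric FNE, and the statement as written fails.

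The paper's proof passes this step by reducing $|W_-(x)+2x|<1$ to the inequality of Proposition~\ref{prop:SUFF_ONLY_ONE_ROOT}. It therefore tacitly assumes $\sigma>(1-\gamma)^2/(4\gamma^2)$, which is automatic for $\gamma=1$.

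\textbf{How to close it.} With that assumption added, your proof works: the identity puts $x_\mathrm{s}$ on $h_-$, and Proposition~\ref{prop:SUFF_ONLY_ONE_ROOT} gives $|a_\mathrm{cl}|<1$. Alternatively, read stability as $\gamma a_\mathrm{cl}^2<1$, which is what makes \eqref{eq:value_function} finite. Then the same identity, applied to all three roots of $g$, completes the proof for all parameters: the outer roots sit on $h_+$ and have $\gamma a_\mathrm{cl}^2>1$.
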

\begin{pf}
    Although the existence and uniqueness of a stable FNE have already been established in Section~\ref{sec:mathematical_analysis}, the proof of Theorem~\ref{thm:suff_multi} relies on a different argument. In particular, it is based on the relation between the optimal best-responses $h_-^{(i)}$ and the bounded root $r_-^{(i)}$ of the FNE law, which, in the symmetric setting, collapse into two functions containing both symmetric and hyperbolic components. This can be seen in Figure~\ref{fig:factorization}, where the pink curve corresponds to the bounded root $r_-^{(i)}$ which still admits a unique solution with the best-response map but, in the symmetric setting, it does not represents the symmetric property for every value of $x_i$. As a result, in order to prove the theorem, a different approach is required. 

    Let us consider the symmetric branch equation~\eqref{eq:symm_branch} and evaluate the sign at the extrema of the domain~\eqref{eq:sFNE_dom}, which turns out to be discordant. By the IVT, existence is ensured. For the uniqueness part, we solve the symmetric branch equation w.r.t. $a$, which returns two functionals admitting respectively one and two distinct roots. By proving that the one-root functional is the stable one, we obtain uniqueness of the solution. See Appendix~\eqref{appendix:symmetric_unique_symm_FNE} for the full proof.    
    $\hfill\qed$
\end{pf}
Section~\ref{sec:mathematical_analysis} provided a sufficient condition for the existence of multiple FNE in the general case, see Theorem~\ref{thm:suff_multi}. The next result fully characterizes the interval for which the game admits three stable FNE and their corresponding closed-form expressions. 

\begin{theorem}\label{thm:hyperbolic_FNE}
     The symmetric LQ game~\eqref{eq:symmetric_setting} admits two hyperbolic stable FNE if and only if
    \begin{equation}\label{eq:conditions_multiplicity}
        |a|> \sqrt{\sigma}+\sqrt{\sigma+\frac{1}{\gamma}},
    \end{equation}
     whose expressions are $(x_\mathrm{h_1}, x_\mathrm{h_2}), (x_\mathrm{h_2}, x_\mathrm{h_1})$ where
\begin{equation}\label{eq:hyp_FNE}
    x_{h_{1,2}}=\displaystyle \frac{1 - a^{2}\gamma \mp \sqrt{\Delta(\gamma, \sigma, a)}}{2a\gamma}
\end{equation}
and $\Delta(\gamma, \sigma, a):=(\gamma a^{2} - 1)^{2} - 4\gamma^2\sigma a^{2}$. 
\end{theorem}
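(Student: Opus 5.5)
The plan is to solve the hyperbolic branch \eqref{eq:hyperb_branch} in closed form, then identify exactly which of its solutions are genuine stable FNE. By the discussion preceding Theorem~\ref{thm:symmetric_unique_symm_FNE}, every stable FNE lies on the symmetric or the hyperbolic component of $p$. By Proposition~\ref{prop:ONLY_ONE_ROOT}, a hyperbolic stable FNE $(x_i,\sigma/x_i)$ must satisfy \eqref{eq:hyperb_branch}. Conversely, a solution of \eqref{eq:hyperb_branch} with $x_{-i}=\sigma/x_i$ satisfies the reduced system \eqref{eq:optimal_system}, hence the first two lines of \eqref{eq:best_response}, because $p=0$ encodes the second equation. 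So it is a stable FNE as soon as $|a_\mathrm{cl}|<1$.

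\textbf{Step 1 (removing the square root).} Write \eqref{eq:hyperb_branch} as
\[
\gamma(\sigma+x^2+2ax)=-\sqrt{\gamma^2(\sigma+x^2)^2+4\gamma x^2}.
\]
It is equivalent to the squared identity together with the sign condition $\sigma+x^2+2ax<0$; the inequality is strict because the radicand is positive for $x\neq 0$. Squaring, the term $\gamma^2(\sigma+x^2)^2$ cancels on both sides. Dividing by $4\gamma x\neq0$ (Lemma~\ref{lemma:exclude_null}) leaves the quadratic
\[
a\gamma x^2+(a^2\gamma-1)x+a\gamma\sigma=0.
\]
Its roots are exactly $x_{h_{1,2}}$ in \eqref{eq:hyp_FNE}. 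By Vieta, $x_{h_1}x_{h_2}=\sigma$, so the two roots are mutually hyperbolic. The candidate FNE are therefore $(x_{h_1},x_{h_2})$ and $(x_{h_2},x_{h_1})$, and they are two distinct points exactly when $\Delta>0$.

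\textbf{Step 2 (sign condition and stability).} From the quadratic, $x^2+\sigma=-(a^2\gamma-1)x/(a\gamma)$, which gives
\[
\sigma+x^2+2ax=x\,\frac{a^2\gamma+1}{a\gamma}.
\]
So the sign condition holds iff $x$ and $a$ have opposite signs. Since the product of the roots is $\sigma>0$ and their sum is $(1-a^2\gamma)/(a\gamma)$, both roots have sign opposite to $a$ iff $a^2\gamma>1$. Moreover
\[
a_\mathrm{cl}=a+x_{h_1}+x_{h_2}=\frac{1}{a\gamma},
\]
so the stability requirement becomes an explicit condition on $|a|$. Membership in the domain \eqref{eq:sFNE_dom} should then follow, or can be checked directly from the explicit roots.

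\textbf{Step 3 (translating to \eqref{eq:conditions_multiplicity}).} The condition $\Delta>0$ reads $|a^2\gamma-1|>2\gamma|a|\sqrt\sigma$, and I split into two cases.
\begin{itemize}
\item If $a^2\gamma>1$, it becomes $\gamma a^2-2\gamma\sqrt\sigma|a|-1>0$. This holds iff $|a|>\sqrt\sigma+\sqrt{\sigma+1/\gamma}$, which is \eqref{eq:conditions_multiplicity}.
\item If $a^2\gamma<1$, the sign condition of Step 2 already fails, so no hyperbolic FNE exists. Alternatively, this case forces $|a|<\sqrt{\sigma+1/\gamma}-\sqrt\sigma\le 1/\sqrt\gamma$, contradicting stability.
\end{itemize}
When $\Delta=0$ the two hyperbolic points collapse into $x=\sqrt\sigma$ or $-\sqrt\sigma$, which lies on the symmetric branch. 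This is why the inequality in \eqref{eq:conditions_multiplicity} is strict and why there are then no two distinct hyperbolic FNE.

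\textbf{Main obstacle.} The delicate point is reconciling the stability threshold from $a_\mathrm{cl}=1/(a\gamma)$ with \eqref{eq:conditions_multiplicity}. Condition \eqref{eq:conditions_multiplicity} gives $\gamma a^2>1$, which directly yields $\gamma a_\mathrm{cl}^2<1$: this is the convergence requirement of the discounted value function \eqref{eq:value_function} and the condition under which $\partial V^{(i)}/\partial x_i=0$ characterizes the equilibrium. Obtaining the stricter $|a_\mathrm{cl}|<1$, i.e.\ $|a|>1/\gamma$, needs more care. I would first try to show that \eqref{eq:conditions_multiplicity} implies $|a|\gamma>1$, using $\sqrt{\sigma+1/\gamma}>1/\sqrt\gamma$ together with the lower bound on $\sigma$ relative to $\gamma$ that appears in Proposition~\ref{prop:SUFF_ONLY_ONE_ROOT}. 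If that fails, I would apply Proposition~\ref{prop:SUFF_ONLY_ONE_ROOT} when $\sigma>(1-\gamma)^2/(4\gamma^2)$, and otherwise read stability as the discounted condition $\gamma a_\mathrm{cl}^2<1$ under which \eqref{eq:value_function} is valid. The case $\gamma=1$ is unproblematic, since there the two conditions coincide.
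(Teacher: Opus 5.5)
Your Steps 1--3 are correct and follow essentially the paper's route. You isolate the radical in \eqref{eq:hyperb_branch}, square, reduce to $a\gamma x^2+(a^2\gamma-1)x+a\gamma\sigma=0$, and combine $\Delta>0$ with the sign of the non-radical side. Your identity $\sigma+x^2+2ax=x(a^2\gamma+1)/(a\gamma)$, together with Vieta, gives ``sign condition $\iff a^2\gamma>1$'' more cleanly than the paper does. The paper instead evaluates the right-hand side of \eqref{eq:hyp} at $x_{h_{1,2}}$ and then discards $[a_2,a_3]$. One small tightening: do not just say that $p=0$ ``encodes'' the second equation. Note instead that $x_{h_1}$ and $x_{h_2}$ both satisfy the quadratic and the sign condition, so \eqref{eq:hyperb_branch} holds at each of them. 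That gives $x_{h_2}=h_-(x_{h_1})$ and $x_{h_1}=h_-(x_{h_2})$ directly.

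The gap is the final stability step, and it cannot be closed for the theorem as stated. For the hyperbolic pair, stability is exactly $|a_{\mathrm{cl}}|=1/(\gamma|a|)<1$. Your first idea works precisely in the regime of Proposition~\ref{prop:SUFF_ONLY_ONE_ROOT}: squaring shows that $\sqrt{\sigma}+\sqrt{\sigma+1/\gamma}\ge 1/\gamma$ if and only if $\sqrt{\sigma}\ge(1-\gamma)/(2\gamma)$, with equality at the threshold. But that bound is not a hypothesis of the theorem, and below it the ``if'' direction is false. Take $\gamma=1/2$, $\sigma=1/100$, $a=9/5$:
\begin{itemize}
\item $|a|>\sqrt{\sigma}+\sqrt{\sigma+1/\gamma}\approx 1.518$ and $\Delta=0.352>0$.
\item Both hyperbolic points ($\approx-0.674$ and $\approx-0.0148$) solve \eqref{eq:optimal_system} and lie in \eqref{eq:sFNE_dom}.
\item Their discounted costs are finite, since $\gamma a_{\mathrm{cl}}^2=50/81<1$.
\item Yet $a_{\mathrm{cl}}=10/9>1$, so they are not stable in the sense of Definition~\ref{def:stable_fne}.
\end{itemize}
Your fallback of reading stability as $\gamma a_{\mathrm{cl}}^2<1$ therefore proves a different statement.

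What your argument actually establishes is that two hyperbolic stable FNE exist if and only if $|a|>\max\{\sqrt{\sigma}+\sqrt{\sigma+1/\gamma},\,1/\gamma\}$. This coincides with \eqref{eq:conditions_multiplicity} exactly when $\gamma=1$ or $\sigma\ge(1-\gamma)^2/(4\gamma^2)$. The ``only if'' direction holds in general. You should state the result that way rather than patch the definition. The paper's own proof has the same hole, only less visibly: it intersects with \eqref{eq:sFNE_dom}, which is necessary but not sufficient for stability, and never checks $|a_{\mathrm{cl}}|<1$. On this point your proposal is the more careful of the two.
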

\begin{pf}
    The proof relies on the study of positivity of the discriminant of the quadratic hyperbolic equation, which is the squared expression of~\eqref{eq:hyperb_branch}. The full derivation is in Appendix \eqref{appendix:hyperbolic_FNE}. 
    $\hfill\qed$
\end{pf}

\section{Saddle FNE}\label{sec:saddle}
In the context of NE-seeking algorithms for LQ games, stability of a FNE is typically defined in terms of closed-loop LQR stability and does not, by itself, characterize the behavior of iterative solution methods in its neighborhood. In particular, LQR stability does not imply local attraction with respect to a given learning law like, in our case,  the best-response one. As a result, a stable FNE may still exhibit saddle-type behavior for the induced algorithmic dynamics, giving rise to locally repulsive directions. Under generic initializations, this prevents convergence to such equilibria.

Motivated by these considerations, this Section investigates the saddle property of a stable FNE through the analysis of the dynamical system induced by the best-response operator~\eqref{eq:best_response}. The notion of saddle equilibrium considered here is therefore algorithm-dependent and it is intrinsically tied to the Jacobian of the best-response dynamics. Consequently, the results presented in this section do not necessarily extend to alternative learning schemes based on different update laws. 
The analysis begins by denoting a functional $F(x_i, x_{-i})$ corresponding to the FNE system~\eqref{eq:best_response} where
\begin{equation*}
    F_i(x_i, x_{-i}):= \gamma x_i(x_{-i}-h^{(i)}_-(x_i))(x_{-i}-h^{(i)}_+(x_i)),
\end{equation*}
is the $i$-th component is the factorized best-response expression of the $i$-th player. Let $(x_i, x_{-i})$ be a stable FNE, and let us compute the Jacobian of $F(x_i, x_{-i})$ at the stable equilibrium. Exploiting Proposition~\ref{prop:ONLY_ONE_ROOT} to conclude that $x_i-h^{(i)}_-(x_i)=0$, we can simplify the derivatives of $F_i(x_i, x_{-i})$ as
\begin{equation}\label{eq:jacob_terms} 
    \begin{aligned}
        &\frac{\partial F_i}{\partial x_i}(x_i, x_{-i})=\gamma x_ih_-^{\prime(i)}(x_i)\left(x_{-i}-h_+^{(i)}(x_i)\right),\\
        &\frac{\partial F_i}{\partial x_{-i}}(x_i, x_{-i})=-\gamma x_{i}\left(x_{-i}-h_+^{(i)}(x_i)\right),
    \end{aligned}
\end{equation}
and obtain the matrix of the stable FNE
\begin{equation*}
J_F(x_i, x_{-i})=
    \begin{bmatrix}
       \frac{\partial F_i}{\partial x_i}(x_i, x_{-i}) & \frac{\partial F_i}{\partial x_{-i}}(x_i, x_{-i})\\
       \frac{\partial F_{-i}}{\partial x_i}(x_i, x_{-i}) & \frac{\partial F_{-i}}{\partial x_{-i}}(x_i, x_{-i})
    \end{bmatrix}.
\end{equation*}
We define an equilibrium point $(x_i, x_{-i})$ as a saddle one if the corresponding eigenvalues of the Jacobian evaluated at the point admit discordant signs, and it can be summed up through the sign of the determinant
\begin{equation}\label{eq:conditions_saddle}
     \mathrm{det}(J_F(x_i, x_{-i}))<0,          
\end{equation}
where 
\begin{equation*}
        \mathrm{det}(J_F):= \left( \frac{\partial F_i}{\partial x_i} \cdot \frac{\partial F_{-i}}{\partial x_{-i}}\right)- \left(\frac{\partial F_{-i}}{\partial x_i} \cdot \frac{\partial F_i}{\partial x_{-i}}\right).
\end{equation*}
If~\eqref{eq:conditions_saddle} holds, then there exist two directions where the gradient grows while in the other decreases. Let us rewrite the determinant through the factorization shown earlier~\eqref{eq:jacob_terms} which simplifies as the following product
\begin{equation*} 
\displaystyle \mathrm{det}(J_F) = \gamma^2A(x_i, x_{-i})B(x_i, x_{-i})C(x_i, x_{-i}),
\end{equation*}
with
\begin{equation}\label{eq:C}
\begin{aligned}
    &A(x_i, x_{-i}) = x_ix_{-i},\\
    &B(x_i, x_{-i}) = \left(x_{-i}-h_+^{(i)}(x_i)\right)\ \left(x_i-h_+^{(-i)}(x_{-i})\right), \\
    &C(x_i, x_{-i})=h_-^{\prime(i)}(x_i)h_-^{\prime(-i)}(x_{-i})-1.
\end{aligned}
\end{equation}
The sign of the product depends on the concordance among the three terms as $\gamma^2>0$. In the next theorem we show that~\eqref{eq:C} can be reduced to a simpler condition, yielding the saddle-shape property a much simpler validation process.  
\begin{theorem}\label{thm:saddle}
    Let us consider a stable FNE $(x_i^\star, x_{-i}^\star)$. Then it is locally saddle if and only if $C(x_i^\star, x_{-i}^\star)<0$. 
\end{theorem}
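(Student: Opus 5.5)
The plan is to show that, at any stable FNE, the product $A(x_i^\star,x_{-i}^\star)\,B(x_i^\star,x_{-i}^\star)$ is strictly positive. Since $\gamma^2>0$, the factorization $\det(J_F)=\gamma^2 ABC$ then gives $\operatorname{sign}\det(J_F)=\operatorname{sign} C$. The statement follows from the saddle definition~\eqref{eq:conditions_saddle}: $\det(J_F)<0$ if and only if $C<0$. Recall also that a negative determinant of a real $2\times 2$ matrix forces two real eigenvalues of opposite sign, so the notion is well posed.

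First, I will rewrite $B$ using Proposition~\ref{prop:ONLY_ONE_ROOT}. At a stable FNE we have $x_{-i}^\star=h_-^{(i)}(x_i^\star)$, so the first factor of $B$ becomes
\begin{equation*}
x_{-i}^\star-h_+^{(i)}(x_i^\star)=h_-^{(i)}(x_i^\star)-h_+^{(i)}(x_i^\star)=-\frac{\sqrt{D_i(x_i^\star)}}{\gamma x_i^\star},
\end{equation*}
where $D_i(x):=\gamma^2(\sigma_i+x^2)^2+4\gamma x^2$, as read off from~\eqref{eq:roots}. The same argument with $x_i^\star=h_-^{(-i)}(x_{-i}^\star)$ gives the second factor $-\sqrt{D_{-i}(x_{-i}^\star)}/(\gamma x_{-i}^\star)$. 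Multiplying, we obtain
\begin{equation*}
A\,B = x_i^\star x_{-i}^\star\cdot\frac{\sqrt{D_i(x_i^\star)}\sqrt{D_{-i}(x_{-i}^\star)}}{\gamma^2 x_i^\star x_{-i}^\star}=\frac{\sqrt{D_i(x_i^\star)D_{-i}(x_{-i}^\star)}}{\gamma^2}.
\end{equation*}

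Second, I will check that this expression is well defined and strictly positive. The denominators are nonzero because Lemma~\ref{lemma:exclude_null}, with the standing assumption $a\neq 0$, gives $x_i^\star,x_{-i}^\star\neq 0$. Independently, Proposition~\ref{prop:existence} places both policies in the same open interval between $0$ and $-a$, so $A=x_i^\star x_{-i}^\star>0$; this is a consistency check, since the cancellation already removes $A$. Finally, $D_j(x)\ge \gamma^2\sigma_j^2>0$ because $\sigma_j>0$, so both square roots are strictly positive. Hence $AB>0$, and the sign of $\det(J_F)$ coincides with the sign of $C$.

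The only delicate point is the sign bookkeeping of the factors $1/x$ that appear in $h_\pm^{(i)}$. The potential obstacle is that $h_+-h_-$ carries the sign of $x_i$, which could make $B$ negative. The plan resolves this by pairing $B$ with $A$ before taking signs, so the factors $x_i^\star x_{-i}^\star$ cancel exactly. Proposition~\ref{prop:existence} is kept as a backup argument: both policies share a sign, so $B$ alone is positive as well.
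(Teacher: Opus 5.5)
Your proof is correct. It follows the paper's overall reduction: show $AB>0$, so that $\mathrm{sign}\,\det(J_F)=\mathrm{sign}\,C$. The way you obtain that positivity is a genuinely different and cleaner route.

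The paper treats $A$ and $B$ separately. It uses Proposition~\ref{prop:existence} to get $A=x_ix_{-i}>0$. It then writes each factor of $B$ as $a+x_{-i}+g^{(i)}(x_i)$, with $g^{(i)}(x_i)=\frac{\gamma(x_i^2-\sigma_i)-\sqrt{m_1^{(i)}(x_i)}}{2\gamma x_i}$. For $a>0$ it argues that $a+x_{-i}>0$ because $x_{-i}\in(-a,0)$, and that $g^{(i)}(x_i)>0$ by the bound $\sqrt{m_1^{(i)}}\ge\gamma(\sigma_i+x_i^2)$. So the paper never uses the equilibrium relation inside $B$, only the location of the equilibrium in the stability square.

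You instead substitute $x_{-i}^\star=h_-^{(i)}(x_i^\star)$ directly into $B$. This turns each factor into the exact expression $-\sqrt{D_i(x_i^\star)}/(\gamma x_i^\star)$ and gives the identity $\det(J_F)=\sqrt{D_i(x_i^\star)D_{-i}(x_{-i}^\star)}\,C(x_i^\star,x_{-i}^\star)$. This buys you two things:
\begin{itemize}
\item No inequality estimates are needed.
\item Proposition~\ref{prop:existence} is not used at all; you need only Proposition~\ref{prop:ONLY_ONE_ROOT} and $x_j^\star\neq 0$. The sign of $A$ turns out to be irrelevant, since it cancels against the factor $1/(x_i^\star x_{-i}^\star)$ hidden in $B$.
\end{itemize}

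The paper's route does give a slightly stronger by-product: $B_1$ and $B_2$ are positive on the whole open square $(-a,0)^2$ (for $a>0$), not only at the equilibrium. That extra generality is unused, because the simplified Jacobian entries \eqref{eq:jacob_terms} are valid only at the equilibrium anyway.

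A minor point: your bound $D_j\ge\gamma^2\sigma_j^2>0$ relies on $\sigma_j>0$. If the paper's notation is read as allowing $\sigma_j=0$, then $D_j(x)\ge 4\gamma x^2>0$ for $x\neq 0$ still suffices.
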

\begin{pf}
    The proof simply narrows down to prove that $B(x_i, x_{-i})>0$ as $A(x_i, x_{-i})=x_ix_{-i}>0$ for every stable FNE thanks to domain \eqref{eq:sFNE_dom}. The algebraic computations can be found in Appendix \eqref{appendix:saddle}. 
\end{pf}
Thus, a condition to verify the local saddle-property of a stable FNE is to check whether the term $C(x_i, x_{-i})$ is negative. In the next section we provide an additional layer of information regarding the saddle condition in the symmetric setting. 

\subsection{Symmetric Setting}

From what we have proven so far, the study of the symmetric setting narrows down to two types of stable FNE: the symmetric one and the hyperbolic ones (when they exist). Before proving our results, we provide an interesting and useful statement regarding the symmetric stable FNE.
\begin{lemma}\label{lemma:symm_sqrt_d}
    Let us consider a symmetric stable FNE $(x^\star, x^\star)$ of the symmetric game. Then 
    \begin{equation*}
        |a|\geq \sqrt{\sigma}+\sqrt{\sigma+\frac{1}{\gamma}}\iff |x^\star|\geq\sqrt{\sigma}.
    \end{equation*}
\end{lemma}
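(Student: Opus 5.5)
Without loss of generality we take $a>0$; the case $a<0$ follows by the symmetry $(a,x)\mapsto(-a,-x)$ of the best-response equations. By Proposition~\ref{prop:existence} the symmetric stable FNE satisfies $x^\star\in(-a,0)$, so $|x^\star|=-x^\star$. The statement then reads $a\geq \sqrt{\sigma}+\sqrt{\sigma+1/\gamma}\iff x^\star\leq-\sqrt{\sigma}$. The idea is to turn this into a sign test of the symmetric branch function at the single point $x=-\sqrt{\sigma}$.

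Define
\begin{equation*}
g(x):=-\sigma\gamma+\gamma x(2a+3x)+\sqrt{\gamma^2(\sigma+x^2)^2+4\gamma x^2},
\end{equation*}
the left-hand side of the symmetric branch~\eqref{eq:symm_branch}, so that $g(x^\star)=0$. I first establish the sign pattern of $g$ on $(-a,0)$.

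At the left endpoint,
\begin{equation*}
g(-a)=\gamma a^2-\sigma\gamma+\sqrt{\gamma^2(\sigma+a^2)^2+4\gamma a^2}>\gamma a^2-\sigma\gamma+\gamma(\sigma+a^2)>0.
\end{equation*}
Near the right endpoint, $g(0)=0$. Expanding the square root as $\gamma\sigma+O(x^2)$ gives $g(x)=2a\gamma x+O(x^2)$, so $g<0$ on some interval $(-\epsilon,0)$.

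Theorem~\ref{thm:symmetric_unique_symm_FNE} says $x^\star$ is the unique zero of $g$ in $(-a,0)$. Since $g$ is continuous, this yields $g>0$ on $(-a,x^\star)$ and $g<0$ on $(x^\star,0)$.

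Now split according to where $-\sqrt{\sigma}$ lies.

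\emph{Case $a\leq\sqrt{\sigma}$.} Then $x^\star>-a\geq-\sqrt{\sigma}$, so the right-hand side of the equivalence is false. The left-hand side is also false, because $a\leq\sqrt{\sigma}<\sqrt{\sigma}+\sqrt{\sigma+1/\gamma}$. Hence the equivalence holds trivially.

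\emph{Case $a>\sqrt{\sigma}$.} Then $-\sqrt{\sigma}\in(-a,0)$, and by the sign pattern $x^\star\leq-\sqrt{\sigma}$ holds iff $g(-\sqrt{\sigma})\leq 0$. A direct evaluation gives
\begin{equation*}
g(-\sqrt{\sigma})=-\sigma\gamma-2a\gamma\sqrt{\sigma}+3\gamma\sigma+2\sqrt{\gamma\sigma(\gamma\sigma+1)}=2\gamma\sqrt{\sigma}\Big(\sqrt{\sigma}-a+\sqrt{\sigma+1/\gamma}\Big).
\end{equation*}
This is $\leq0$ exactly when $a\geq\sqrt{\sigma}+\sqrt{\sigma+1/\gamma}$, which closes the argument.

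The main delicate step is the sign pattern of $g$ on $(-a,0)$. Because $g(0)=0$, the right endpoint cannot be used directly, which is why the local expansion near $0^-$ is needed. Uniqueness must also be taken from Theorem~\ref{thm:symmetric_unique_symm_FNE}, to exclude extra sign changes or tangential zeros. If one prefers a self-contained argument, one can instead check that $g$ has no zero in $(-a,0)$ other than $x^\star$ by squaring~\eqref{eq:symm_branch}, which is how that theorem is proved in the appendix.
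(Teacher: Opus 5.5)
Your proof is correct, but it takes a different route from the appendix proof. The paper writes $a=W_-(x^\star)$ with $W_-$ from \eqref{eq:W_functional_a} and splits it as $W_1+W_2$. It then bounds $W_-$ from above on $(-\sqrt{\sigma},0)$ via $\sqrt{u+z}\le\sqrt{u}+\sqrt{z}$, and obtains the converse only by ``changing the inequality sign''. You instead evaluate the branch function exactly at the threshold point, $g(-\sqrt{\sigma})=2\gamma\sqrt{\sigma}\bigl(\sqrt{\sigma}+\sqrt{\sigma+1/\gamma}-a\bigr)$, and let the sign pattern of $g$ on $(-a,0)$ give both implications at once.

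This buys sharpness, because the subadditivity route is lossy. For $x<0$ it gives $\sqrt{(\sigma+x^2)^2+4x^2/\gamma}\le\sigma+x^2-2x/\sqrt{\gamma}$. Hence it yields only $W_-(x)<-2x+1/\sqrt{\gamma}<2\sqrt{\sigma}+1/\sqrt{\gamma}$, which overshoots $\sqrt{\sigma}+\sqrt{\sigma+1/\gamma}$. The two viewpoints are linked by the identity $g(x)=2\gamma x\bigl(a-W_-(x)\bigr)$ for $x\neq0$. Your sign test is therefore exactly $W_-(-\sqrt{\sigma})=\sqrt{\sigma}+\sqrt{\sigma+1/\gamma}$ combined with monotonicity of $W_-$.

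One point to tighten. The \emph{statement} of Theorem~\ref{thm:symmetric_unique_symm_FNE} gives uniqueness only among stable symmetric FNE. By itself it does not rule out a second zero of $g$ in $(-a,0)$ with $|a+2x|\ge 1$. What you actually need is the strict monotonicity of $W_-$ on $(-\infty,0)$ established in its proof. Via the identity above, this makes $x^\star$ the only zero of $g$ there, with no squaring needed; the squaring route you mention would force you to discard extraneous roots of the cubic. It is also quick to check directly. With $x=-t$, $W_-(-t)=\tfrac{3t}{2}-\tfrac{\sigma}{2t}+\tfrac12\sqrt{(t+\sigma/t)^2+4/\gamma}$, whose derivative in $t$ is $\tfrac32+\tfrac{s}{2}+\tfrac{\rho}{2}(1-s)\ge\tfrac32$. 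Here $s=\sigma/t^2$ and $\rho=(t+\sigma/t)/\sqrt{(t+\sigma/t)^2+4/\gamma}\in(0,1)$.
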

\begin{pf}
    The proof is given in Appendix~\eqref{appendix:symm_sqrt_d}. 
\end{pf}
The property shown so far is crucial for the study of the local saddle-property of the symmetric FNE, as shown in the following theorem. 
\begin{theorem}\label{thm:saddle_symm}
    Consider a symmetric stable FNE $(x^\star, x^\star)$. Then it is locally saddle if and only if it is unique. 
\end{theorem}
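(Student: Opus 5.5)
By Theorem~\ref{thm:saddle}, since the stable symmetric FNE $(x^\star,x^\star)$ satisfies $A>0$ and $B>0$, it is locally saddle if and only if $C(x^\star,x^\star)<0$. In the symmetric setting \eqref{eq:symmetric_setting} the two best-response maps coincide, $h_-^{(i)}=h_-^{(-i)}=:h_-$, so
\begin{equation*}
C(x^\star,x^\star)=\bigl(h_-^{\prime}(x^\star)\bigr)^2-1 .
\end{equation*}
The saddle property is therefore equivalent to $|h_-^{\prime}(x^\star)|<1$.

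On the other side, Theorem~\ref{thm:symmetric_unique_symm_FNE} and Theorem~\ref{thm:hyperbolic_FNE} together say that the stable FNE is unique exactly when no hyperbolic stable FNE exists. By Theorem~\ref{thm:hyperbolic_FNE} this happens if and only if $|a|\le\sqrt{\sigma}+\sqrt{\sigma+1/\gamma}$. Lemma~\ref{lemma:symm_sqrt_d} translates this, up to the boundary case, into $|x^\star|\le\sqrt{\sigma}$; the equality case $|x^\star|=\sqrt\sigma$ must be handled separately. The theorem thus reduces to the one-variable claim
\begin{equation*}
|h_-^{\prime}(x^\star)|<1 \iff |x^\star|<\sqrt{\sigma},
\end{equation*}
together with a check of the equality case $|x^\star|=\sqrt\sigma$, where I expect $|h'_-(x^\star)|=1$ and hence $C=0$ (not saddle). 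There the hyperbolic solutions coincide with the symmetric one, so the two sides of the equivalence must be reconciled.

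To prove the reduced claim, I will differentiate \eqref{eq:roots}:
\begin{equation*}
h_-^{\prime}(x)=-\frac{\sigma}{2x^2}-\frac12-\frac{d}{dx}\left(\frac{\sqrt{\gamma^2(\sigma+x^2)^2+4\gamma x^2}}{2\gamma x}\right).
\end{equation*}
At $x=x^\star$ I will eliminate the square root using the symmetric-branch identity \eqref{eq:symm_branch}:
\begin{equation*}
\sqrt{\gamma^2(\sigma+x^{\star2})^2+4\gamma x^{\star2}}=\sigma\gamma-\gamma x^\star(2a+3x^\star).
\end{equation*}
The parameter $a$ can also be expressed through $x^\star$ via that same relation. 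This should turn $h_-^{\prime}(x^\star)$ into a rational function of $x^\star$, $\sigma$ and $\gamma$ only.

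The structural reason I expect behind the result is the bifurcation picture. The reduced map $(x,y)\mapsto(y-h_-(x),\,x-h_-(y))$ has Jacobian determinant $1-h_-^{\prime}(x)h_-^{\prime}(y)$. The symmetric and hyperbolic branches $x_{-i}=x_i$ and $x_ix_{-i}=\sigma$ of the FNE law intersect precisely at $x_i=x_{-i}=\pm\sqrt\sigma$. At such a pitchfork point the determinant must vanish, with the kernel along the anti-diagonal, which forces $h_-^{\prime}(x^\star)=-1$ there. Concretely, I will show:
\begin{enumerate}
\item $h_-^{\prime}(x^\star)<0$ on the stable domain \eqref{eq:sFNE_dom}, using the sign of $x^\star$ relative to $-a$; then $|h'_-|<1$ is equivalent to $h_-^{\prime}+1>0$;
\item after eliminating the root as above, $h_-^{\prime}(x^\star)+1$ is a positive factor times $\sigma-x^{\star2}$.
\end{enumerate}

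The main obstacle is step 2: the factorization of $h_-^{\prime}(x^\star)+1$ with a sign-definite cofactor. I would first try substituting $a=\bigl(\sigma\gamma-3\gamma x^{\star2}-\sqrt{\cdot}\bigr)/(2\gamma x^\star)$ and squaring carefully, then verifying that $\sigma-x^{\star2}$ divides the resulting numerator. If that gets messy, the fallback is the implicit-function argument above. That argument requires showing that $h'_-(x^\star)+1$ vanishes only at $|x^\star|=\sqrt\sigma$, and combining this with continuity in $a$ and the evaluation of one explicit case (say $a\to0$, where $x^\star\to0$ and $|h'_-|<1$) to fix the sign on each side.
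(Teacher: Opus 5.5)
\textbf{There is a genuine gap: the sign in your step~2 is reversed, so the statement cannot be proved in the form you are aiming for.} Your reduction matches the paper's. By Theorem~\ref{thm:saddle} the FNE is a saddle iff $C<0$. In the symmetric case $C=(h_-^{\prime})^2-1$, and $h_-^{\prime}<0$. Hence it is a saddle iff $h_-^{\prime}(x^\star)+1>0$. Your reduced claim, however, is $|h_-^{\prime}(x^\star)|<1\iff|x^\star|<\sqrt{\sigma}$, and this is false.

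Since $a$ enters $h_-$ only through the additive term $-a$, $h_-^{\prime}$ does not depend on $a$. There is nothing to eliminate via \eqref{eq:symm_branch}. With $S(x)=\sqrt{\gamma^2(\sigma+x^2)^2+4\gamma x^2}$, direct differentiation gives
\begin{equation*}
h_-^{\prime}(x)=-\frac{(\sigma+x^2)\bigl(S(x)+\gamma(x^2-\sigma)\bigr)}{2x^2S(x)},\qquad
h_-^{\prime}(x)+1=\frac{(x^2-\sigma)\bigl(S(x)-\gamma(\sigma+x^2)\bigr)}{2x^2S(x)},
\end{equation*}
and $S(x)>\gamma(\sigma+x^2)$ for $x\neq0$. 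So the cofactor of $\sigma-x^2$ in $h_-^{\prime}+1$ is \emph{negative}, and $|h_-^{\prime}(x)|<1\iff|x|>\sqrt{\sigma}$ for every $x\neq0$.

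Your anchor for the continuity fallback fails for the same reason. As $x\to0$, $h_-^{\prime}(x)\to-(1+\frac{1}{\gamma\sigma})$, so for small $|a|$ the unique FNE has $|h_-^{\prime}|>1$ and $C>0$; it is not a saddle. Two concrete cases with $\gamma=\sigma=1$:
\begin{itemize}
\item For $a=1$ the FNE is unique, $x_\mathrm{s}\approx-0.355$, $h_-^{\prime}(x_\mathrm{s})\approx-1.53$, so $C>0$.
\item For $a=3>1+\sqrt{2}$ there are three stable FNE, $x_\mathrm{s}\approx-1.30$, $h_-^{\prime}(x_\mathrm{s})\approx-0.94$, so $C<0$.
\end{itemize}
Read literally, the equivalence therefore fails in both directions, and neither step~2 nor the fallback can be completed.

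Once the sign is fixed, your framework proves that the symmetric FNE is a saddle iff $|x^\star|>\sqrt{\sigma}$. This is also what the paper's appendix proof shows, via $C=(\sigma-x^2)D(x)$ with $D(x)\ge4x^4>0$. By Lemma~\ref{lemma:symm_sqrt_d} and Theorem~\ref{thm:hyperbolic_FNE}, $|x^\star|>\sqrt{\sigma}$ is exactly the regime $|a|>\sqrt{\sigma}+\sqrt{\sigma+1/\gamma}$ in which the two hyperbolic FNE exist. In other words, the symmetric FNE is a saddle iff the stable FNE is \emph{not} unique. That is the version the paper uses right after the theorem (``multiplicity implies the presence of saddle-shaped equilibria'') and in Section~\ref{sec:value}. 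So ``unique'' in the printed statement should be read as ``not unique''.

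Your pitchfork observation is correct and fits the corrected version. At $|x^\star|=\sqrt{\sigma}$ one has $h_-^{\prime}=-1$ and $C=0$, so there is no saddle. At the same time $\Delta(\gamma,\sigma,a)=0$, so the hyperbolic solutions collapse onto the symmetric one and the FNE is unique. With the sign corrected, your argument is essentially the paper's proof of the reversed equivalence.
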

\begin{pf}
    The proof is given in Appendix~\eqref{appendix:saddle_symm}. 
\end{pf}

Thus, multiplicity implies the presence of saddle-shaped equilibria, which are unstable under the best-response-based methods. In the next Section, we highlight another interesting feature regarding repulsive equilibria from a different perspective.

\section{Value Function}\label{sec:value}
We want to investigate analytical and numerical results concerning the value function and its relation across multiple stable FNE. First, we analytically establish an ordering across multiple equilibria in the symmetric setting and show that the same structure persists numerically in the generic case. Second, we discuss that the central FNE is sub-optimal in terms of individual cost but optimal in terms of aggregate cost. And lastly, we merge the discussion with the characterization of the saddle property of stable FNE to highlight the importance of classification of equilibria in the context of the iterative best-response algorithm.

Thanks to the explicit formulas of the symmetric setting, the next proposition establishes a precise ordering among the three stable FNE, which exhibits a specific structure. 

\begin{theorem}\label{prop:value_func_order}
    Let $(x_\mathrm{s}, x_\mathrm{s}),(x_\mathrm{h_1}, x_\mathrm{h_2}), (x_\mathrm{h_2}, x_\mathrm{h_1})$ be, respectively, the symmetric and the two hyperbolic stable FNE of the symmetric game. Then, the ordering
    \begin{equation}\label{eq:ordering}
        \mathrm{min}(x_\mathrm{h_1}, x_\mathrm{h_2})<x_\mathrm{s}<\mathrm{max}(x_\mathrm{h_1}, x_\mathrm{h_2}),
    \end{equation}
    holds, and for any initial state $s_0$ and for each player $i$
    \begin{equation*}
        V^{(i)}(s_0; x_\mathrm{h_2}, x_\mathrm{h_1})\leq V^{(i)}(s_0; x_\mathrm{s}, x_\mathrm{s})\leq V^{(i)}(s_0;x_\mathrm{h_1}, x_\mathrm{h_2}).
    \end{equation*}    
\end{theorem}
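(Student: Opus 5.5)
Throughout I would work with $a>0$; the case $a<0$ follows from the symmetry $(a,x)\mapsto(-a,-x)$ of \eqref{eq:best_response}. Then Proposition~\ref{prop:existence} places all three equilibria in $(-a,0)$. By Theorem~\ref{thm:hyperbolic_FNE}, the existence of the hyperbolic pair forces the strict inequality \eqref{eq:conditions_multiplicity}. The plan has three steps: write down the algebraic relations satisfied by $x_\mathrm{h_{1,2}}$ and $x_\mathrm{s}$, derive a closed form for the equilibrium value, and reduce both claims to the sign of one quadratic evaluated at a point determined by $x_\mathrm{s}$.

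\emph{Step 1: ordering of policies.} From \eqref{eq:hyp_FNE}, $x_\mathrm{h_1},x_\mathrm{h_2}$ are the two roots of
\begin{equation*}
q(x):=a\gamma x^2-(1-a^2\gamma)x+a\gamma\sigma .
\end{equation*}
Hence $x_\mathrm{h_1}x_\mathrm{h_2}=\sigma$ and $x_\mathrm{h_1}+x_\mathrm{h_2}=(1-a^2\gamma)/(a\gamma)$. The ordering \eqref{eq:ordering} is therefore equivalent to $q(x_\mathrm{s})<0$, since $a\gamma>0$.
\begin{itemize}
\item To get this sign, I would use that $x_\mathrm{s}$ is a root of the cubic obtained by isolating and squaring the square root in the symmetric branch \eqref{eq:symm_branch}.
\item I would divide $q$ by that cubic (or eliminate $\sigma$ between the two relations) so that $q(x_\mathrm{s})$ becomes an expression whose sign is controlled by the following facts.
\item The first fact is $x_\mathrm{s}\in(-a,0)$.
\item The second is Lemma~\ref{lemma:symm_sqrt_d}, which under \eqref{eq:conditions_multiplicity} gives $|x_\mathrm{s}|\ge\sqrt\sigma$.
\item The third is stability, $|a+2x_\mathrm{s}|<1$.
\end{itemize}
Geometrically, $\sqrt\sigma$ lies between $|x_\mathrm{h_1}|$ and $|x_\mathrm{h_2}|$ because their product is $\sigma$, so only the outer comparison with the larger hyperbolic policy needs real work.

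\emph{Step 2: closed form of the equilibrium value.} At any stable FNE, the stationarity condition $\partial V^{(i)}/\partial x_i=0$ applied to \eqref{eq:value_function} gives $x_i(1-\gamma a_\mathrm{cl}^2)+\gamma a_\mathrm{cl}(\sigma+x_i^2)=0$. Therefore
\begin{equation*}
V^{(i)}=-\frac{s_0^2 r_i\,x_i}{\gamma\, a_\mathrm{cl}(x)} .
\end{equation*}
For the hyperbolic equilibria, $a_\mathrm{cl}=a+x_\mathrm{h_1}+x_\mathrm{h_2}=1/(a\gamma)$, so $V^{(i)}=-s_0^2r_i\,a\,x_i$. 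For the symmetric equilibrium, $V^{(i)}=-s_0^2 r_i\,x_\mathrm{s}/(\gamma(a+2x_\mathrm{s}))$.

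\emph{Step 3: ordering of values.} After dividing by $s_0^2r_i>0$, the value chain reduces to showing that $y_\mathrm{s}:=x_\mathrm{s}/(\gamma(a+2x_\mathrm{s}))$ lies between $ax_\mathrm{h_1}$ and $ax_\mathrm{h_2}$. The numbers $ax_\mathrm{h_{1,2}}$ are the roots of
\begin{equation*}
\tilde q(y):=\gamma y^2-(1-a^2\gamma)y+\gamma\sigma a^2 ,
\end{equation*}
so it suffices to prove $\tilde q(y_\mathrm{s})<0$. I would clear the denominator $(a+2x_\mathrm{s})^2>0$ and again reduce modulo the cubic satisfied by $x_\mathrm{s}$, using the same three facts as in Step 1. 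The labelling in the statement, with player $i$ holding $x_\mathrm{h_2}$ getting the lower cost, then follows by checking which root of \eqref{eq:hyp_FNE} has smaller $|ax_\mathrm{h}|$ when $a>0$. The non-strict inequalities follow from the strict ones.

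The main obstacle is the two sign computations $q(x_\mathrm{s})<0$ and $\tilde q(y_\mathrm{s})<0$. The closed form of $x_\mathrm{s}$ in Theorem~\ref{thm:symmetric_unique_symm_FNE} is unusable for this, so everything must be done implicitly through the cubic. If a direct remainder computation does not give a manifestly signed expression, my fallback is monotonicity. I would parametrize $\sigma$ by $x_\mathrm{s}$ through the symmetric branch and show that the remainder is monotone in $x_\mathrm{s}$. It vanishes at the boundary $|a|=\sqrt\sigma+\sqrt{\sigma+1/\gamma}$, where Lemma~\ref{lemma:symm_sqrt_d} gives $|x_\mathrm{s}|=\sqrt\sigma$ and the three equilibria coalesce. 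I would then fix its sign inside the multiplicity region \eqref{eq:conditions_multiplicity} from that boundary.
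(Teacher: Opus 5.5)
Your proposal is correct. The sign computations you flag as the main obstacle close exactly, so the monotonicity fallback is not needed. For the cubic $\mathcal{C}$ of \eqref{eq:cubic} one has the identity $q(x)-\mathcal{C}(x)=2\gamma(a+x)(\sigma-x^2)$. Also, $\mathcal{C}(x_\mathrm{s})=0$ can be rewritten as $\sigma-x_\mathrm{s}^2=x_\mathrm{s}\bigl(a-\tfrac{1}{\gamma c}\bigr)$ with $c:=a+2x_\mathrm{s}$. Together these give
\[
q(x_\mathrm{s})=2\gamma(a+x_\mathrm{s})(\sigma-x_\mathrm{s}^2),\qquad \tilde q(y_\mathrm{s})=\frac{(a+x_\mathrm{s})(1+\gamma a c)}{c}\,(\sigma-x_\mathrm{s}^2),
\]
so both orderings reduce to $x_\mathrm{s}^2>\sigma$. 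Two details need to be made explicit:
\begin{itemize}
\item You need $c>0$. The bound $|c|<1$ does not give this, but your Step 2 relation does, since $\gamma c(\sigma+x_\mathrm{s}^2)=-x_\mathrm{s}(1-\gamma c^2)>0$.
\item Lemma~\ref{lemma:symm_sqrt_d} only yields $x_\mathrm{s}^2\ge\sigma$. Equality would make $-\sqrt\sigma$ a double root of $q$ (its partner root is $\sigma/x_\mathrm{s}$), contradicting $\Delta>0$.
\end{itemize}

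The paper argues differently and stays inside the cubic.
\begin{itemize}
\item It evaluates $\mathcal{C}$ at the midpoint $m=-\tfrac a2+\tfrac{1}{2\gamma a}$ of the hyperbolic policies and gets a positive multiple of $\Delta$. Since $\mathcal{C}$ has a single root in $(-a,0)$, this yields $x_\mathrm{s}>m$, i.e.\ $a_\mathrm{cl}(x_\mathrm{s})>1/(\gamma a)=a_\mathrm{cl}(x_h)$.
\item The comparison with $x_{\mathrm{h}_2}$ is a second evaluation of $\mathcal{C}$. The sign actually needed there is $\mathcal{C}(x_{\mathrm{h}_2})=-2\gamma(a+x_{\mathrm{h}_2})(\sigma-x_{\mathrm{h}_2}^2)<0$, as your identity shows.
\item For the lower value bound it compares the factors $\sigma+x^2$ and $1/(1-\gamma a_\mathrm{cl}^2)$ separately. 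For the upper bound it uses closed forms obtained by eliminating $\sigma$ case by case.
\end{itemize}
By your elimination, $x_\mathrm{s}^2>\sigma\iff c>1/(\gamma a)\iff x_\mathrm{s}>m$, so both proofs rest on the same inequality. The paper proves it self-containedly from $\mathcal{C}(m)>0$. You import it from Lemma~\ref{lemma:symm_sqrt_d}, and hence from the monotonicity of $W_-$. In return, your first-order-condition formula $V^{(i)}=-s_0^2r_ix_i/(\gamma a_\mathrm{cl})$ covers every FNE at once. Both the policy and the value orderings then collapse to the single sign of $\sigma-x_\mathrm{s}^2$.
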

\begin{pf}
    The proof is given in Appendix~\eqref{appendix:value_func_order}. 
\end{pf}
    
The proposition highlights the hyperbolic equilibria as unbalanced in terms of optimal value functions: while one of the two is achieving the best outcome among the solutions, the other is achieving the worst; this kind of equilibrium is usually denoted as anti-coordination equilibrium. On the other hand, the symmetric equilibrium presents itself as a trade off between the two cases for both players.

\begin{figure}[t!]
    \centering
    \includegraphics[width=\linewidth]{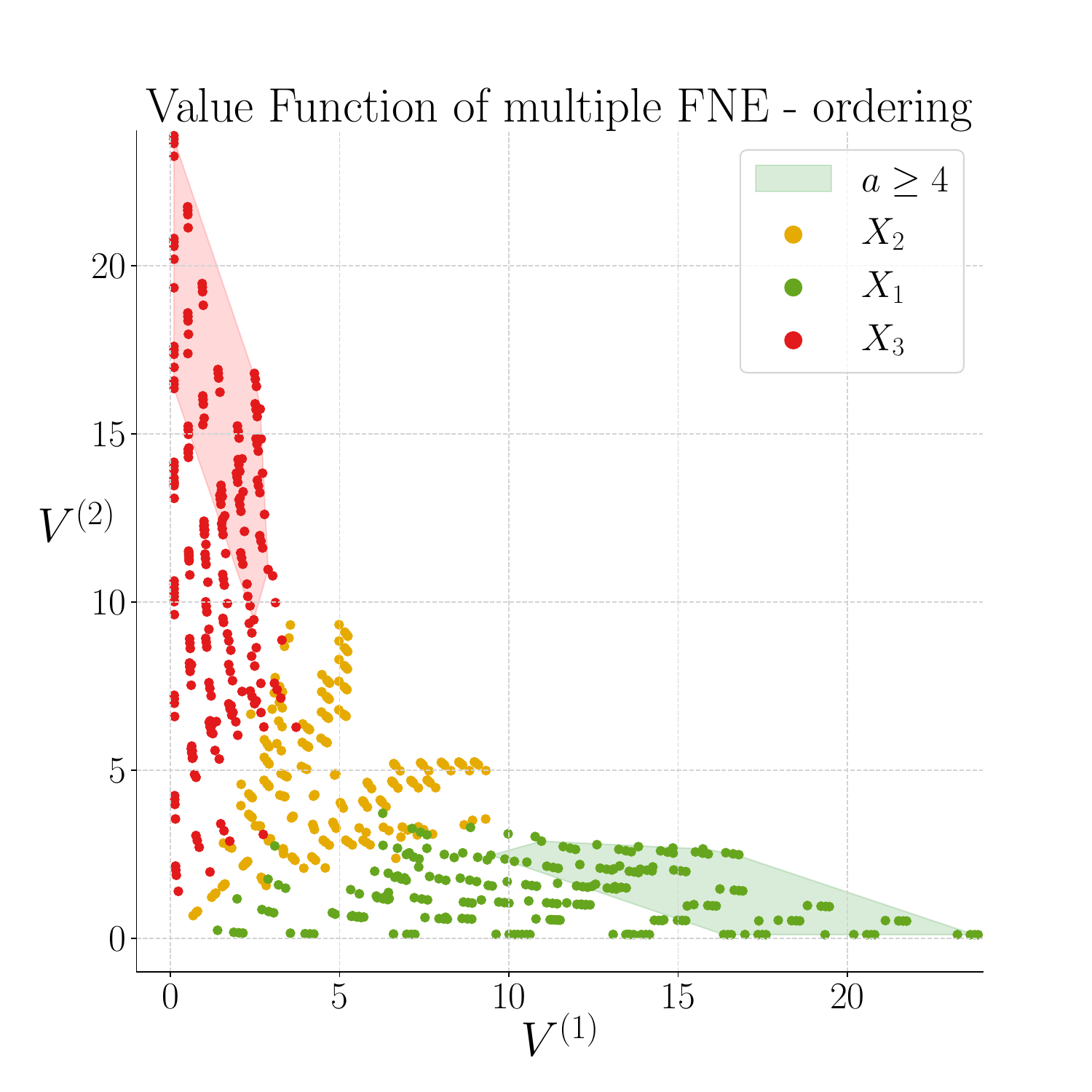}
    \caption{Display of value functions of the three stable FNE per different game configurations.}
    \label{fig:ordering}
\end{figure}

To verify if the same structure persists in the general setting, we focused on the generic game, denoted by the three ordered FNE $\mathrm{X}_1=\left(x^{(1)}_1, x^{(2)}_1\right), \mathrm{X}_2=\left(x^{(1)}_2, x^{(2)}_2\right), \mathrm{X}_3=\left(x^{(1)}_3, x^{(2)}_3\right)$, and computed the corresponding value function per different game configurations. For finding the FNE, we developed a root finding approach based on the optimal branch system~\eqref{eq:optimal_system}. We let vary $\gamma\in\{0.6, 0.8, 0.9, 1\}$, $\sigma_1, \sigma_2\in[0.1, 2]$ and searched for a feasible $a$ admitting three stable FNE starting from the necessary condition bound~\eqref{eq:NEC_MULT} up to a maximum value of $5$. Figure~\ref{fig:ordering} shows that the value function of respectively player $1$ and $2$ reveal a mirrored ordering suggesting that the behavior proved in the Theorem~\ref{prop:value_func_order} subsists in the generic setting as well. As in the symmetric case, the two outer equilibria favor one player at the expense of the other, while the central equilibrium provides a balanced outcome. Furthermore, the figure shows that increasing the game parameter~$a$ amplifies the divergence between the value functions at the outer equilibria $-$ as one can observe from the light colored areas $-$ highlighting the disparity between the corresponding solutions.

In addition, we evaluate the community cost (CC) of every FNE $X_i$ as $V^{(C)}_i=V^{(1)}_{X_i}+V^{(2)}_{X_i}$ and compare the outer equilibria $X_{1,3}$ with the central one $X_2$ through CC ratios, i.e., $\displaystyle
\frac{V^{(C)}_i}{V^{(C)}_2}$ for every $i=1,3$, as shown in Figure~\ref{fig:comm_cost} across different games configurations. For fixed values of $\gamma$, we vary $\sigma_1, \sigma_2$ and $a$ over the previously indicated ranges. For every fixed value of $\gamma$, the CC values are ordered according to the ratio $\frac{\sigma_1}{\sigma_2}$ as one can observe from the switching positions of the CCs of $x_1$ and $X_3$. The overall figure shows that, across all tested configurations, the outer equilibria yield a higher community cost than the central FNE. The only borderline cases are those in which the CC of the two outer equilibria correspond to the same value, and are close to the central solution's CC $V^{(C)}_2$. This is the case in the symmetric setting $\sigma_1=\sigma_2$ when $a$ is just above the threshold of existence of multiple FNE, making the distance between the three stable solutions small. Overall, these findings identify the central FNE $X_2$ as the equilibrium minimizing the community cost, despite the existence of two other equilibria which are better for one of the two agents but worse for the other.

\begin{figure}[t!]
    \centering
    \includegraphics[width=\linewidth]{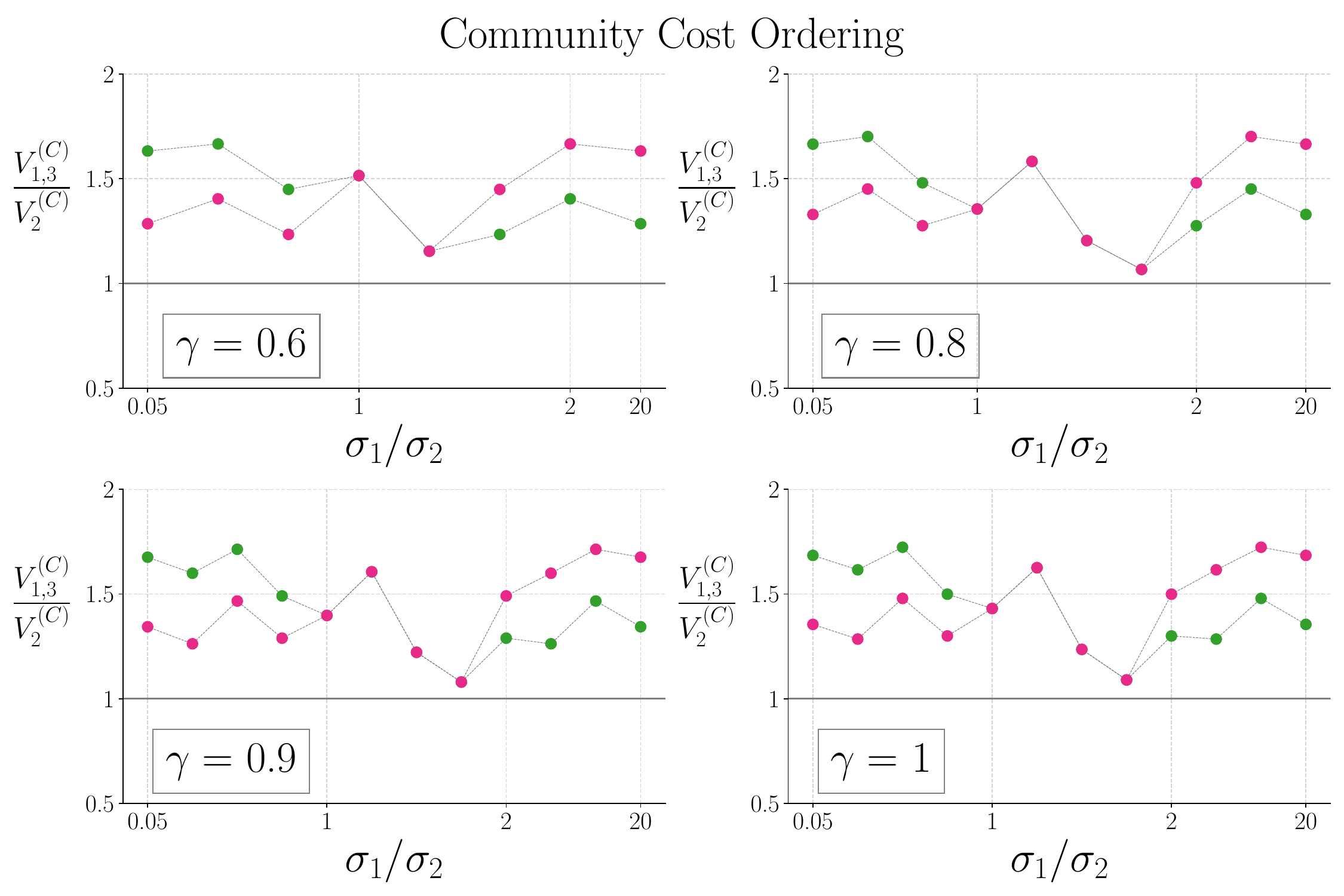}
    \caption{Community Cost evaluation of outer equilibria $X_1$ (pink dots), $X_3$ (green dots) with respect to the central equilibrium $X_2$. }
    \label{fig:comm_cost}
\end{figure}

Now, we can relate the findings with the structural properties of the equilibria. In the symmetric setting, it was shown in Theorem~\ref{thm:saddle_symm} that the central FNE is a saddle point of the best-response operator whenever multiple equilibria exist. While the results above identify this equilibrium as the most desirable from both individual and collective perspectives, its saddle-point nature has critical algorithmic consequences: the best-response operator fail to converge to saddle equilibria and instead are attracted to one of the two stable but suboptimal equilibria, as already shown in~\cite{grad_desce}. Moreover, numerical experiments confirm that this saddle-point property persists in the generic setting: the central FNE consistently corresponds to a saddle, whereas the remaining two equilibria are locally stable. Consequently, the iterative best-response method is structurally biased against equilibria that are socially optimal, in the case of multiple FNE, as it prefers an unbalanced equilibrium instead. 

\section{Conclusions}
In this paper we considered the class of infinite-horizon dynamic $\gamma$-discounted LG games, with a focus on the two-player scalar setting. We derived necessary and sufficient conditions for the existence and multiplicity of feedback Nash equilibria, together with explicit solution expressions for the symmetric case. The concept of saddle FNE was introduced and simplified through an analytical condition to verify whether a stable FNE possesses the property. Analytical and numerical results showed that, in the presence of multiple equilibria, the socially desirable solution systematically corresponds to a saddle point of the best-response dynamics. This structural property explains why iterative best-response schemes may fail to converge to such equilibria and instead be attracted to suboptimal stable solutions. This paper serves as a foundation for extension to the $N$-players setting, together with a set of analytical tools for a baseline validation of iterative methods in dynamic games. 

\appendix
\section{Proofs}   
\subsection{Proof of Lemma~\ref{lemma:exclude_null}}
\label{appendix:exclude_null}
$(\Rightarrow)$. Let us assume $x_i=0$ so that system~\eqref{eq:best_response} becomes
\begin{equation*}
\begin{cases}
 \gamma \sigma_{i} \left(a + x_{-i}\right)=0\\
 a \gamma \sigma_{-i} - a \gamma x_{-i}^{2}+  x_{-i} \left(- a^{2} \gamma + \gamma \sigma_{-i} + 1\right)=0\\
 \left|a+x_{-i}\right|<1
        \end{cases},
\end{equation*}
    where the first equation implies $x_{-i}=-a$ as the only admissible solution (considering that $\gamma\sigma_i\neq0$). By substituting it inside the second equation we obtain $a=0$.\\
$(\Leftarrow)$. Let us assume $a=0$ and substitute it inside~\eqref{eq:best_response} for both players
\begin{equation}\label{eq:equa_nulls}
    \begin{cases}
        x_{-i}\left(\gamma\sigma_i-\gamma x_i^2\right) + x_i \left(\gamma \sigma_{i} - \gamma x_{-i}^{2} + 1\right)=0,\\
        x_i\left(\gamma\sigma_{-i}-\gamma x_{-i}^2\right) + x_{-i} \left(\gamma \sigma_{-i} - \gamma x_{i}^{2} + 1\right)=0,\\
        \left|x_i+x_{-i}\right|<1.
    \end{cases}
\end{equation}
Let us subtract the first two equations and obtain 
\begin{equation}\label{eq:subtract}
     x_i\left( \gamma \sigma_{i} - \gamma \sigma_{-i} + 1\right) + x_{-i} \left(\gamma \sigma_{i} - \gamma \sigma_{-i} - 1\right)=0.
\end{equation}
If $(\gamma \sigma_{i} - \gamma \sigma_{-i} + 1)=0$ or $(\gamma \sigma_{i} - \gamma \sigma_{-i} - 1)=0$ then the equation admits as the only solution $x_i=0$ or $x_{-i}=0$. As a consequence, by substituting it inside the system~\eqref{eq:equa_nulls} we obtain as a final FNE the null vector $(0, 0)$ which is trivially a stable FNE and so the claim is proven. Thus, assume $(\gamma \sigma_{i} - \gamma \sigma_{-i} + 1)\neq 0$ and $(\gamma \sigma_{i} - \gamma \sigma_{-i} - 1)\neq 0$ and solve equation~\eqref{eq:subtract} w.r.t. $x_{-i}$, which is
\begin{equation*}
    x_{-i}= x_i\frac{ (\gamma \sigma_{i} - \gamma \sigma_{-i} + 1)}{(- \gamma \sigma_{i} + \gamma \sigma_{-i} + 1)}.
\end{equation*}
Let us denote $k=\frac{ (\gamma \sigma_{i} - \gamma \sigma_{-i} + 1)}{(- \gamma \sigma_{i} + \gamma \sigma_{-i} + 1)}$ and substitute it into the first two equations of the system~\eqref{eq:equa_nulls} to obtain
\begin{equation*}
\begin{cases}
     x_i \left(\gamma k^{2} x_i^{2} - \gamma k \sigma_{i} + \gamma k x_i^{2} - \gamma \sigma_{i} - 1\right)=0,\\
      x_i \left(\gamma k^{2} x_i^{2} - \gamma k \sigma_{-i} + \gamma k x_i^{2} - \gamma \sigma_{-i} - k\right)=0.
\end{cases}
\end{equation*}
If $x_i=0$ the claim is obtained. If $x_i\neq0$ then let us solve both equations w.r.t. $x_i^2$ obtain
\begin{equation*}
    x_i^2=\displaystyle\frac{\gamma \sigma_i(k+1)+1}{\gamma k(k+1)};\quad 
    x_i^2=\displaystyle \frac{\gamma \sigma_{-i}(k+1)+k}{\gamma k(k+1)}.
\end{equation*}
Let us take both equations, substitute $k$ and subtract them. What we obtain is the following:
\begin{equation*}
    \frac{\left(\sigma_{i} - \sigma_{-i} \left(\gamma \sigma_{i} - \gamma \sigma_{-i} + 1\right)\right) \left(- \gamma \sigma_{i} + \gamma \sigma_{-i} + 1\right)}{\gamma \sigma_{i} - \gamma \sigma_{-i} + 1}=0,
\end{equation*}
where the denominator and the second term of the numerator are non zero by assumption. Then, the equation is true if and only if $\left(\sigma_{i} - \sigma_{-i} \left(\gamma \sigma_{i} - \gamma \sigma_{-i} + 1\right)\right)=0$. Let us solve it w.r.t. $\sigma_i$ and obtain $\sigma_i=\sigma_{-i}$. Then $k=1$ which leads to $x_i=x_{-i}$. By substituting into the original equation of the system~\eqref{eq:equa_nulls} we obtain a one-variable equation $x_i \left(- 2 \gamma \sigma_{i} + 2 \gamma x_i^{2} - 1\right)=0$ which admits as possible solutions $x_i=0$ or $x_i=- \frac{\sqrt{4 \sigma_{i} + \frac{2}{\gamma}}}{2}$. Considering that the first root cannot hold by construction, let us inspect if $x_i= - \frac{\sqrt{4 \sigma_{i} + \frac{2}{\gamma}}}{2}$ is stable i.e. $\left|2x_i\right|=\left| \sqrt{4 \sigma_{i} + \frac{2}{\gamma}}\right|<1\ \iff\ \sigma_i<\frac{1}{4}-\frac{1}{2\gamma}<0$ which is not admissible. As a consequence, $x_i$ must be null.
$\hfill\qed$


\subsection{Proof of Proposition~\ref{prop:ONLY_ONE_ROOT}}\label{appendix:ONLY_ONE_ROOT}
Let $(x_i, x_{-i})$ be a stable FNE, i.e., each coordinate being a root to the system \eqref{eq:best_response} which is factorized as
\begin{subequations}
\label{eq:best_response_facto}
\begin{align}
    (x_{-i}-h^{(i)}_-(x_i))(x_{-i}-h^{(i)}_+(x_i))&=0\\
    (x_{i}-h^{(-i)}_-(x_{-i}))(x_{i}-h^{(-i)}_+(x_{-i}))&=0\\
    |a + x_i + x_{-i}| &< 1 
\end{align}
\end{subequations}
In order for the claim to hold, we shall prove for each player that the only solution branch that admits stable FNE is the $x_{-i}=h^{(i)}_-(x_i)$. To that end, we prove that for every player policy $x_i$, we have 
\begin{equation}\label{dis:stab_unstab}
    |a+x_i+h^{(i)}_+(x_i)|>1
\end{equation}
which leads to $h_-^{(i)}(x_i)$ being the only branch admitting a stable FNE, if it exists. 
Let us prove the inequality by substituting the expression of $h^{(i)}_+(x_i)$ from~\eqref{eq:roots} and obtain a one-variable function
\begin{equation*}
    z_\mathrm{u}^{(i)}(x_i) =  \frac{ \gamma(\sigma_{i}+2 x_i^2)+ \sqrt{\gamma^2(\sigma_i+x_i^2)^2+4\gamma x_i^2}}{2 \gamma x_i},
\end{equation*}
which is diverging to $\pm\infty$ at the limits of $x_i\rightarrow\pm\infty$ and at $x_i\rightarrow 0$. It is unbounded and symmetric w.r.t. the origin, which leads to its maxima/minima  being attained at $\overline{x_i}=\pm\sqrt{\sigma_i}$, and whose corresponding values $z^{(i)}_\mathrm{u}(\pm\sqrt{\sigma_i})$ are always above one: $\left| \sqrt{\sigma_i} + \sqrt{\gamma \sigma_i + \frac{1}{\gamma}}\right|>1\quad \forall \sigma_i,\gamma$. Thus, the $h_+^{(i)}(x_i)$ branch is never inside in the stability area for every possible policy $x_i$; from the factorization of the system~\eqref{eq:best_response_facto} it follows that if a stable FNE exists then it must satisfy the other branch which leads to the thesis of the proposition.
$\hfill\qed$
\subsection{Proof of Proposition~\ref{prop:SUFF_ONLY_ONE_ROOT}}\label{appendix:SUFF_ONLY_ONE_ROOT}
Let us consider $i$ such that $\sigma_i=\displaystyle\min_{1, 2}{\sigma_j}$ and the inequality of~\eqref{dis:stab_unstab}; let us substitute the expression of $h^{(i)}_-(x_i)$ from~\eqref{eq:roots} and obtain a one-variable function
\begin{equation*}
   z^{(i)}_\mathrm{s}(x_i) = \frac{\gamma(\sigma_i+x_i^2)}{2\gamma x_i}-\frac{\sqrt{\gamma^2(\sigma_i+x_i^2)^2+4\gamma x_i^2}}{2\gamma x_i},
\end{equation*}
which can be simplified by multiplying the numerator by
$1=\frac{\gamma(\sigma_i+x_i^2)+\sqrt{\gamma^2(\sigma_i+x_i^2)^2+4\gamma x_i^2}}{\gamma(\sigma_i+x_i^2)+\sqrt{\gamma^2(\sigma_i+x_i^2)^2+4\gamma x_i^2}}$ to obtain
\begin{equation*}
   z^{(i)}_\mathrm{s}(x_i) = \frac{-2x_i}{\gamma(\sigma_i+x_i^2)+\sqrt{\gamma^2(\sigma_i+x_i^2)^2+4\gamma x_i^2}}. 
\end{equation*}
The function converges to $0$ at the limit of $x_i\rightarrow\pm\infty$ and $x_i\rightarrow 0$, making the mapping bounded. Moreover, it is symmetric w.r.t. the origin and so its maxima/minima are attained at the symmetric points $\pm\sqrt{\sigma_i}$ with the corresponding values being denoted $z^{(i)}_\mathrm{s}(\pm\sqrt{\sigma_i}):=z^{(i)}_\mathrm{s,max/min}$. The function entirely lies inside the stability area if the values $z^{(i)}_\mathrm{s,max/min}$ are lower than one. By substitutions one can obtain the explicit expression and verify the condition
\begin{equation}\label{eq:stab_root_inside_stab_range}
    \left|\sqrt{\sigma_i}+ \sqrt{\gamma \sigma_i + \frac{1}{\gamma}}\right|<1\iff \sigma_i>\frac{(1-\gamma)^2}{4\gamma^2}.
\end{equation}
Numerically speaking, this result becomes the trivial condition $\sigma_i>0$ as $\gamma\rightarrow1$, which is the case of most applications, considering that for an example of $\gamma=0.9$ it would require $\sigma_i>3\cdot 10^{-3}$. Thus, if~\eqref{eq:stab_root_inside_stab_range} holds, then it means that the root $x_{-i}=h^{(i)}_-(x_i)$ is always inside the stability area and so any intersection with it leads to a stable FNE. Sequentially, by the necessary condition~\eqref{eq:necessary_condition}, we have that $x_i = h^{(-i)}_-(x_{-i})$. 
$\hfill\qed$
\subsection{Proof of Proposition~\ref{prop:existence}}\label{appendix:existence}
We start by fixing player $-i$ and consider its corresponding stable best-response $x_{-i}=h^{(i)}_-(x_i)$. Let us compute the first derivative
\begin{equation}
         h^{\prime(i)}_-(x_i)= - \frac{\left(\sigma_{i} + x_{i}^{2}\right) L^{(i)}(x_i)}{2 x_{i}^{2} \left(\gamma (\sigma_i+x_i^2)^2 + 4 x_{i}^{2}\right)},
\end{equation}
where 
\begin{equation}
\begin{aligned}
        L^{(i)}(x_i)=&\gamma (\sigma_i+x_i^2)^2 + 4 x_{i}^{2}\\
        &+(x_i^2- \sigma_{i} )\sqrt{\gamma^2(\sigma_i+x_i^2)^2 + 4 \gamma x_{i}^{2}}.
\end{aligned}        
\end{equation}
We firstly observe that its sign depends solely on $L^{(i)}(x_i)$ as the remaining terms are trivially positive. Let us note from $L^{(i)}(x_i)$ that $$\sqrt{\gamma^2(\sigma_i+x_i^2)^2 + 4 \gamma x_{i}^{2}}\geq \gamma(\sigma_i+x_i^2),$$ which implies, after few algebraic manipulations, that $L^{(i)}(x_i)\geq x_i^2\left[\gamma\left(\sigma_i+x_i^2\right)+4\right]>0$. As a consequence, the optimal best-response is strictly monotone, which means the stable best-response map crosses the $x_i$ axis once and only once. In order to find the intersection point in the plane $(x_i, x_{-i})$, let us take the function~\eqref{eq:roots}, isolate $-a$ from the first fraction and then multiply the rest with $\frac{\gamma \left(\sigma_{i} - x_i^{2}\right) + \sqrt{\gamma^2(\sigma_i+x_i^2)^2+4 \gamma x_i^{2}}}{\gamma \left(\sigma_{i} - x_i^{2}\right) + \sqrt{\gamma^2(\sigma_i+x_i^2)^2+4 \gamma x_i^{2}}}$ which yields its rationalized version
\begin{equation*}
    h^{(i)}_-(x_i)=- a - \frac{2 x_i \left(\gamma \sigma_{i} + 1\right)}{\gamma \left(\sigma_{i} - x_i^{2}\right) + \sqrt{\gamma^2(\sigma_i+x_i^2)^2+4 \gamma x_i^{2}}}.
\end{equation*}
In the limit for $x_i\rightarrow 0$ we directly obtain $h_-^{(i)}(0)=-a$. Now let us focus on the intersection with the $x_i$ axis, as we must find the roots of the expression $h^{(i)}_-(x_i)=0$. To deal with that we take the latter expression and set the square root on one side of the equation and the rest to the other side
\begin{equation*}
    \gamma \left(- 2 a x_i + \sigma_{i} - x_i^{2}\right) =\sqrt{\gamma^2(\sigma_i+x_i^2)^2+ 4 \gamma x_i^{2}},
\end{equation*}
with the aim to square both sides. When squaring up, we must take into account the positiveness of the left side which is ensured if
\begin{equation}\label{eq:dom_positiveness}
    x_i\in\left[-a-\sqrt{a^2+\sigma_i}, -a+\sqrt{a^2+\sigma_i}\right].
\end{equation}
Through algebraic manipulations one can obtain the squared expression $\gamma a x_i^2+x_i\left(\gamma a^2 -\gamma \sigma_i-1\right)-\gamma a\sigma_i=0$, which is a quadratic in $x_i$ with two non-null roots 
\begin{equation}\label{eq:intersection}
    \overline{x}_\pm(\sigma_i)= \frac{n^{(i)}-2\gamma a^2 \pm \sqrt{\delta^{(i)}}}{2 \gamma a },
\end{equation}
with $\delta^{(i)}=\gamma^2(a^2+\sigma_i)^2 - 2 \gamma a^{2}  + 2 \gamma \sigma_{i} + 1>0$ and $n^{(i)}=\gamma a^2+\gamma \sigma_i+1>0$. Only one of the two satisfies the positiveness of the right side of the previous equation i.e. condition~\eqref{eq:dom_positiveness}. In fact, we will prove next that $\left|\overline{x}_+(\sigma_i)+a\right|> \sqrt{a^2+\sigma_i}$ and $\left|\overline{x}_+(\sigma_i)+a\right|\leq \sqrt{a^2+\sigma_i}$, which is true for every $a$ but we will assume for simplicity $a>0$ (the proof for the opposite case is symmetric). Let us first compute $\overline{x}_+(\sigma_i)+a=\frac{n^{(i)}+\sqrt{\delta^{(i)}}}{2\gamma a}$ in order to show $\frac{n^{(i)}+\sqrt{\delta^{(i)}}}{2\gamma a}>\sqrt{a^2+\sigma_i}$. To do that, we multiply both sides by $2\gamma a>0$, isolate $\sqrt{\delta^{(i)}}$ on one side and then safely square as both sides are trivially positive. What we've obtain is
\begin{equation*}
        \delta^{(i)} > \left(2\gamma a \sqrt{a^2+\sigma_i}-n^{(i)}\right)^2.
\end{equation*}
Moreover, through algebraic computations one can verify that $\delta^{(i)}=n^{2(i)}-4\gamma a^2$ which substituted inside the latter expression yields $n^{(i)}\left(\sqrt{a^2+\sigma_i}-a\right)>0$, which is positive for every $\sigma_i,a$. Now let us take $\overline{x}_-(\sigma_i)+a=\frac{n^{(i)}-\sqrt{\delta^{(i)}}}{2\gamma a}$ and rationalize it by multiplying numerator and denominator by the positive quantity $n^{(i)}+\sqrt{\delta^{(i)}}$ yielding $\overline{x}_-(\sigma_i)+a=\frac{2a}{n^{(i)}+\sqrt{\delta^{(i)}}}$. Then let us take the inequality $\frac{n^{(i)}-\sqrt{\delta^{(i)}}}{2\gamma a}\leq \sqrt{a^2+\sigma_i}$, multiply both sides by $2\gamma a>0$ and safely square. Through simple manipulations one obtains 
\begin{equation*}
        2n^{(i)}\left[\sqrt{\sigma^{(i)}}-n^{(i)}\right]+4\gamma a^2\left(\gamma(a^2+\sigma_i)+1\right)>0.
\end{equation*}
What we've proved is that the only admissible solution to the equation $h^{(i)}(x_i)=0$ is $x_i=\overline{x}_-(\sigma_i)$~\eqref{eq:intersection}.
Thus, so far we've showed that $h^{(i)}(0)=-a$ and $h^{(i)}(\overline{x}_-(\sigma_i))=0$. By iterating the same procedure for the other player index $i$, we obtain a mirrored behavior, which can be summarized by the following:
\[
\begin{array}{cc}
\begin{cases}
h^{(i)}(0)=-a  \\
h^{(i)}(\overline{x}_-(\sigma_i))=0 
\end{cases}
&
\begin{cases}
h^{(-i)}(0)=\overline{x}_-(\sigma_{-i})\\
h^{(i)}(-a)=0
\end{cases}
\end{array}.
\]
As a result, the two optimal branches $h^{(i)}(x_i)$, $h^{(-i)}(x_{-i})$ can only intersect each other in the open square $\left(\mathrm{min}\{-a, \overline{x}_-(\sigma_i), \overline{x}_-(\sigma_{-i})\}, 0\right)^2$. Moreover, because $\overline{x}_-(\sigma_i)=-a+\frac{n^{(i)}-\sqrt{\delta^{(i)}}}{2\gamma a}>-a$ for every $\sigma_i$, then the claim follows. 
$\hfill\qed$

\subsection{Proof of Lemma~\ref{lemma:facto_FNE_law}}
\label{appendix:facto_FNE_law}
Let us take the FNE law~\eqref{eq:FNE_law}, which is a quadratic in both variables $x_i, x_{-i}$, fix $x_{i}$ and solve it w.r.t. $x_{-i}$:
\begin{equation}\label{eq:FNE_law_factorized}
    p^{(i, -i)}(x_i, x_{-i})=(x_{-i}-r^{(i,-i)}_-(x_{i}))(x_{-i}-r^{(i,-i)}_+(x_{i}))
\end{equation}
with:
\begin{equation*}
    r^{(i,-i)}_\pm(x_i)= \frac{\sigma_i + x_i^{2} \pm \sqrt{(\sigma_i + x_i^2)^2 - 4 \sigma_{-i} x_i^{2} }}{2 x_i}.
\end{equation*}
From here we can categorize the two roots as \emph{unbounded} ($+$) and \emph{bounded} ($-$). We will prove the boundedness for $r^{(i,-i)}_-(x_i)$ and unbound-ness for $r^{(i,-i)}_+(x_i)$. 

$(-)$. Let us rewrite the function $r^{(i,-i)}_-(x_i)$ by multiplying the numerator by the positive quantity $1=\frac{\sigma_{i} + x^{2} + \sqrt{(\sigma_i + x_i^2)^2 - 4 \sigma_{-i} x_i^{2}}}{\sigma_{i} + x^{2} + \sqrt{(\sigma_i + x_i^2)^2 - 4 \sigma_{-i} x_i^{2}}}$ and obtain the rationalized function
\begin{equation*}
    r^{(i,-i)}_-(x_i) = \frac{2\sigma_{-i}x_i}{\sigma_{i} + x_i^{2} + \sqrt{(\sigma_i + x_i^2)^2 - 4 \sigma_{-i} x_i^{2} }}.
\end{equation*}
When approaching infinity, the numerator behaves under the degree of $x_i$ and the denominator behaves under the degree of $x_i^2$ which goes faster to infinity. As a result, we have $\displaystyle\lim_{x_i\rightarrow\pm\infty}r_-^{(i,-i)}=0$. The same output is obtained as $x_i\longrightarrow0$ without any any indeterminate forms. Thus, the function is bounded. Now we shall find its maxima and minima through the finding of the root of equation $r^{\prime(i,-i)}_-=0$ which, through the usage of a symbolic calculator~\cite{simpy}, returns two symmetric points $x_i=\pm\sqrt{\sigma_i}$ whose corresponding values are $r^{(i,-i)}_+(\pm\sqrt{\sigma_i})=\pm \left(\sqrt{\sigma_{i}}- \sqrt{\sigma_i-\sigma_{-i}}\right)\leq \pm \sqrt{\sigma_i}$, which yields to the thesis. \\
$(+)$. Let us take $r^{(i,-i)}_+(x_i)$ and expand each term
\begin{equation*}
    r^{(i,-i)}_+(x_i) = \frac{\sigma_i}{2x_i}+x_i\left[\frac{1}{2}+\frac{1}{2}\sqrt{\frac{\sigma_i}{x_i^2}+x_i^2+2\sigma_i-4\sigma_{-i}}\right].
\end{equation*}
When approaching infinity, the multiplying term $x_i$ determines the sign and so we have that $\displaystyle \lim_{x_i\rightarrow\pm\infty}r_+^{(i,-i)}=\pm\infty$. The same is obtained when $x_i\longrightarrow 0$ where the direction of the limit affects the sign of $\infty$. As a result, we have that $\displaystyle\lim_{x_i\rightarrow 0^\pm}r_+^{(i,-i)}=\pm\infty$. Thus, the function is unbounded. Its eventual maxima and minima are defined in $\mathbb{R}^\pm$ and they are computed via a symbolic calculator~\cite{simpy} which returns two symmetric points $x_i=\pm\sqrt{\sigma_i}$ whose corresponding values are $r^{(i,-i)}_+(\pm\sqrt{\sigma_i})=\pm \left(\sqrt{\sigma_{i}}+ \sqrt{\sigma_i-\sigma_{-i}}\right)\gtrless \pm \sqrt{\sigma_i}$. 
That means that the function never crosses the $x_i$ axis as the extremes are always concordant in sign with the half-space, and the unbound-ness thesis follows, that is
\begin{equation}\label{eq:branches_FNE}
    \left|r^{(i,-i)}_+(x_i, x_{-i})\right|\geq \sqrt{\sigma_i}+\sqrt{\sigma_i-\sigma_{-i}}.
\end{equation}
The classification is true for any $i$ such that $\sigma_i>\sigma_{-i}$; if the latter condition is not true, we simply take the FNE law~\eqref{eq:FNE_law}, fix $x_{-i}$, solve it in terms of $x_i$ and follow the same procedure. 
$\hfill\qed$

\subsection{Proof of Theorem \ref{thm:suff_multi}}\label{appendix:suff_multi}
\emph{Existence}. Let us start with the existence proof, and let us take the $-i$-th optimal equation from system~\eqref{eq:optimal_system} and the $-i$-th factorization from the FNE law~\eqref{eq:FNE_law_factorized} whose intersection leads to a single one-variable functional $F(x_i)=r^{(i, -i)}_-(x_i)-h_-^{(i)}(x_i)$ of expression
\begin{equation*}
    F(x_i)= a+x_i+ \frac{\sqrt{m_1^{(i)}(x_i)}}{2\gamma x_i}-\frac{\sqrt{m_2^{(i,-i)}(x_i)}}{2x_i}=0,
\end{equation*}
with
\begin{equation}\label{A_B}
    \begin{aligned}
        & m_1^{(i)}(x_i) = \gamma^{2}(\sigma_i+x_i^2)^2+4\gamma x_i^2,\\
        & m_2^{(i,-i)}(x_i) = (\sigma_i+x_i^2)^2 - 4 \sigma_{-i} x_i^{2}.
    \end{aligned}
\end{equation}
The proof narrows down to the study inside the domain of stable FNE~\eqref{eq:sFNE_dom} and subsequently the usage of the Intermediate Value Theorem (IVT). Let us start with the study of the sign of the function as $x_i$ gets closer to $0$, where the approaching direction depends on the sign of $a$, see~\eqref{eq:sFNE_dom}. For simplicity, the proof is displayed by assuming $a>0$ and so for $x_i$ approaching zero from negative values. Let us use Taylor's expansion around $x_i=0$ at both square roots of $F(x_i)$ and collect respectively $\gamma \sigma_i$ and $\sigma_i$, which returns
\begin{equation*}
    \begin{aligned}
          &\gamma\sigma_i\sqrt{1+\frac{x_i^4}{\sigma_i^2}+\frac{2x_i^2}{\sigma_i}+\frac{4x_i^2}{\gamma\sigma_i^2}}\approx\gamma\sigma_i+x_i^2\left(\gamma+ \frac{2}{\sigma_i}\right),\\
          &\sigma_i\sqrt{1+\frac{x_i^4}{\sigma_i^2}+\frac{2x_i^2}{\sigma_i}-\frac{4x_i^2\sigma_{-i}}{\sigma_i^2}}\approx \sigma_i+x_i^2\left(1-2 \frac{\sigma_{-i}}{\sigma_i}\right).      
    \end{aligned}
\end{equation*}
The function is approximated around $0$ as $F(x_i)\displaystyle\approx a+x_i\left( 1+\frac{1}{\gamma \sigma_i}+\frac{\sigma_{-i}}{\sigma_i}\right)$ and so the limit is such that $\displaystyle\lim_{x_i\to0}F(x_i)=a>0$. Moreover, by evaluating $F(x_i)$ at $x_i=-a$ we obtain $\frac{ \sqrt{\gamma^2 m_1^{(i)}(-a)} - \sqrt{m_2^{(i,-i)}(-a)}}{2 a \gamma}$, which is always negative as $\gamma^2 m_2^{(i,-i)}(\cdot)<m_1^{(i)}(\cdot)\ \forall\sigma_{i,-i}$. Therefore, by the IVT we have
\begin{equation*}
    \begin{cases}
        F(0)>0\\
        F(-a)<0   \end{cases}\Rightarrow\exists\  x_i^\star\in(-a,0)\ |\ F(x_i^\star)=0,
\end{equation*}
which proves existence. 

\emph{Uniqueness}.  To prove uniqueness, let us study the first derivative of $F(x_i)$ and show that it is strictly increasing, which leads to the claim.
The first derivative of $F(x_i)$ is 
\begin{equation}
    F'(x_i)= \frac{\gamma^{2} x_i^{4} N(x_i)}{2\gamma^2x_i^6\sqrt{m_1^{(i)}(x_i)m_2^{(i,-i)}(x_i)}},
\end{equation}
with $N(x_i)= \left(\sigma_i^2-x_i^4\right)\left( \sqrt{m_1^{(i)}(x_i)}-\gamma \sqrt{m_2^{(i,-i)}(x_i)}\right)+2x_i^2\sqrt{m_1^{(i)}(x_i)m_2^{(i,-i)}(x_i)}$. The sign of $F^\prime(x_i)$ depends on the term $N(x_i)$ as the remaining elements are all positive. If $|x_i|\leq\sqrt{\sigma_i}$ then $N(x_i)$ is the sum of positive terms, and so it is positive as well. If $|x_i|>\sqrt{\sigma_i}$, then we multiply $N(x_i)$ by the positive quantity $\sqrt{m_1^{(i)}(x_i)}+\gamma\sqrt{m_2^{(i,-i)}(x_i)}$ and obtain $\left(m_1^{(i)}(x_i)-\gamma^2 m_2^{(i,-i)}(x_i)\right)\left(\sigma_i^2-x_i^4\right) +2x_i^2\sqrt{m_1^{(i)}(x_i)m_2^{(i,-i)}(x_i)}$, where we subsequently substitute the identity $m_1^{(i)}(x_i)-\gamma^2 m_2^{(i,-i)}(x_i)=4\gamma x_i^2 (\gamma \sigma_{-i}+1)$ to obtain
\begin{equation*}
\begin{aligned}
     &-(\gamma^2 \sigma_{-i}+\gamma)(x_i^4-\sigma_i^2)\\     
     &+\sqrt{m_1^{(i)}(x_i)m_2^{(i,-i)}(x_i)}\left(\sqrt{m_1^{(i)}(x_i)}+\gamma\sqrt{m_2^{(i,-i)}(x_i)}\right).
\end{aligned}
\end{equation*}
While the first term is dominated by $\gamma^2 \sigma_{-i}x_i^4$, the second one is dominated by $\gamma^2x_i^6$, which consequently dominates the whole sum. As a result, the term $N(x_i)$ is strictly positive for every $x_i$, which proves strict monotonicity of the original root function and, consequently, uniqueness of the solution.

\emph{Multiplicity}.  So far we have proven that the intersection with the $r^{(i, -i)}_-(x_i)$ branch always exists and is unique. In order for the game to admit multiple equilibria, the remaining branch $r^{(i, -i)}_+(x_i)$ must also be intersected. We know from Lemma~\ref{lemma:facto_FNE_law} that the maximum is attained in $\mathbb{R}^-$ and the minimum in $\mathbb{R}^+$ with their corresponding values expressions in~\eqref{eq:branches_FNE}. In order for the branch to admit stable FNE it must be inside the FNE square~\eqref{eq:sFNE_dom}, and that means the maxima must exceed $-a$, or equivalently, the minimum must be lower than $-a$, see Figure~\ref{fig:suff_cond} for an example. The thesis follows. $\hfill\qed$.

\subsection{Proof of Theorem \ref{thm:symmetric_unique_symm_FNE}}\label{appendix:symmetric_unique_symm_FNE}
\emph{Existence}. Let us consider the symmetric branch equation~\eqref{eq:symm_branch} and isolate the square root on one side $\sqrt{\gamma^{2} (\sigma^{2} +x_i^2)^2+ 4 \gamma x_{i}^{2}} =  \sigma \gamma - \gamma x_i \left(2 a + 3 x_i\right)$, with the aim to square both sides. In order for the equation to hold once squared, we must define the interval of existence of the right side i.e. 
\begin{equation*}
     \sigma \gamma - \gamma x_i \left(2 a + 3 x_i\right)\geq0\iff\ \left|x_i\right|\leq\frac{a +\sqrt{a^{2} + 3 \sigma}}{3}.
\end{equation*}
At the same time, we've established the domain of all stable FNE policies in~\eqref{eq:sFNE_dom}, which turns out to include the current interval of existence  $\left[-\frac{a +\sqrt{a^{2} + 3 \sigma}}{3}, \frac{a +\sqrt{a^{2} + 3 \sigma}}{3}\right]$ if $|a|>\sqrt{\sigma}$.  Thus, before applying the IVT we will distinguish the two distinct domains of existence depending on $|x|\gtrless\sqrt{\sigma}$. Now let us square the equation and obtain the following cubic expression
\begin{equation}\label{eq:cubic}
    \mathcal{C}(x_i) = 2 \gamma x_i^{3} + 3 a \gamma x_i^{2}+ x_i \left(a^{2} \gamma - 2 \sigma \gamma - 1\right)- a \sigma \gamma.
\end{equation}
Let us start by considering $|a|\leq\sqrt{\sigma}$ which yields the domain~\eqref{eq:sFNE_dom}, and evaluate the sign of the cubic at the extrema
\begin{equation*}
    \begin{aligned}
        &\mathrm{sign}(\mathcal{C}(0))=\mathrm{sign}(- a \sigma \gamma)=-\mathrm{sign}(a)\\
        &\mathrm{sign}(\mathcal{C}(-a))=\mathrm{sign}( a (\sigma \gamma + 1))=\mathrm{sign}(a)
    \end{aligned}
\end{equation*}
which are discordant for every nonzero value of $a$. Thus, by the IVT, the existence of at least a solution is ensured. Let us now consider the case $|a|\geq\sqrt{\sigma}$ which yields the domain 
\begin{equation}\label{dom:symm_branch}
    \text{dom}(\mathcal{C)}=
    \begin{cases}
    [x_-, 0) & a>0\\
    (0, x_+] & a<0
\end{cases}
\end{equation}
and evaluate the sign of the cubic at the outer extremes i.e. $\mathrm{sign}(\mathcal{C}(x_-)) =\mathrm{sign} \left(\sqrt{a^2+3\sigma} \left[\gamma a^2 + 12\sigma\gamma + 9 \right]\right)$ which is positive when $a$ is positive, and $\mathrm{sign}(\mathcal{C}(x_+)) =\mathrm{sign}\left(-\sqrt{a^2+3\sigma} \left[\gamma a^2 + 12\sigma\gamma + 9 \right]+ a(\gamma a^2 + 9)\right)$ which is negative when $a$ is negative. Thus, both cases are discordant with zero. Like in the previous case, the IVT proves existence.

\emph{Uniqueness}. Let us solve the cubic w.r.t. $a$, which is quadratic in $a$, and obtain the two roots
\begin{equation}\label{eq:W_functional_a}
W_\pm(x_i)= \frac{\gamma \left(\sigma - 3 x_i^{2}\right) \pm \sqrt{\gamma^2(\sigma + x_i^2)^2+ 4\gamma x_i^{2}}}{2 \gamma x_i}
\end{equation}
which satisfy the equation $W_\pm(x_i)=a$ at the FNE. The objective is to prove that $W_-(x_i)=a$ is the stable functional, i.e., inside the stable FNE range, and that it admits the single unique root, while $W_+(x_i)=a$ is the unstable one with multiplicity two. Let us start with $W_-(x_i)$ and compute its first derivative
\begin{equation*}
    W_-^\prime(x_i)=\frac{\sqrt{\Phi(x_i)}(\sigma^2-x_i^4)-R(x_i)}{M(x_i)}, 
\end{equation*}
where 
\begin{equation*}
\begin{aligned}
&M(x_i)=2 x_i^{2} \left(\sigma^{2} \gamma + 2 \sigma \gamma x_i^{2} + \gamma x_i^{4} + 4 x_i^{2}\right)>0,\\
    &R(x_i)= d^{3} \gamma + 5 d^{2} \gamma x^{2} + 7 d \gamma x^{4} + 4 d x^{2} + 3 \gamma x^{6} + 12 x^{4},\\
    &\Phi(x_i)=\gamma^2(\sigma+x_i^2)^2 +4\gamma x_i^2.
\end{aligned}
\end{equation*}
Let us study the sign of the numerator, which depends on $(\sigma^2-x_i^4)$. If $|x_i|\geq\sqrt{\sigma}$ then the numerator is a sum of negative terms and so the overall derivative is negative. If $|x_i|<\sqrt{\sigma}$ then let us use the property $\sqrt{a+b}\leq\sqrt{a}+\frac{b}{2\sqrt{a}}$ for every $a,b$ to find an upper bound on $\sqrt{\Delta}(\sigma^2-x_i^4)\leq \gamma (\sigma+x_i^2)+\frac{2x_i^2}{\sigma+x_i^2}$. By using the latter inequality, and by substituting the expression of $R(x_i)$ we obtain the following
\begin{equation*}
\begin{aligned}
    \sqrt{\Delta}(\sigma^2-x_i^4)-R(x_i)\leq&  - 2 x_i^{2} (2 \gamma \sigma^{2} + 4 \gamma \sigma x_i^{2} \\
    &  + 2 \gamma x_i^{4} + \sigma + 7 x_i^{2})\leq 0.
\end{aligned}
\end{equation*}
Thus, the root functional $W_-(x_i)$ is strictly monotone for all $x_i \in \mathbb{R}$; in particular, the equation $W_-(x_i)=a$ admits a unique solution, which corresponds to one FNE. Next, observe that the derivative of the cubic polynomial~\eqref{eq:cubic} is the quadratic $6 \gamma x^{2} + 6 a \gamma x + \left(a^{2} \gamma - 2 d \gamma + - 1\right)$ which admits two distinct real roots for all admissible parameter values. As a direct consequence, the polynomial $\mathcal{C}(x_i)=0$ admits three real roots, exactly one of which satisfies $W_-(x_i)=a$. The remaining two roots must therefore satisfy $W_+(x_i)=a$.  What remains to prove is that the solution associated with $W_-(x_i)=a$ corresponds to a stable FNE, whereas the two solutions associated with $W_+(x_i)=a$ are unstable, and we prove that by substituting inside the stability inequality the expression of $W_-(x_i)=a$ and $x_i=x_{-i}$
\begin{equation}
    |W_-(x_i)+2x_i|<1\ \ \wedge \ |W_+(x_i)+2x_i|>1.
\end{equation}
By substitutions, we obtain the same inequalities of the original system~\eqref{dis:stab_unstab} which we already proved to be satisfied. The claim follows. The closed-form solution is expressed according to Cardano's formula in~\eqref{eq:symm_FNE_te}. 
$\hfill\qed$

\subsection{Proof of Theorem \ref{thm:hyperbolic_FNE}}\label{appendix:hyperbolic_FNE}
 Let us take the hyperbolic expression~\eqref{eq:hyperb_branch}, isolate the square root on one side
    \begin{equation}\label{eq:hyp}
   \sqrt{\gamma^{2} (\sigma^{2} +x_i^2)^2+ 4 \gamma x_{i}^{2}} =  -\sigma \gamma - \gamma x_i \left(2 a + x_i\right),
\end{equation}
    and subsequently square both terms. Before that we first define the interval of positiveness of the right side of \eqref{eq:hyp} which turns out to be an interval centered in $-a$ of width $\sqrt{a^2-\sigma}$; by intersecting the latter with the known stable domain~\eqref{eq:sFNE_dom}, we obtain the following domain for which we have stable hyperbolic solutions:  
\begin{equation}\label{dom:hyperb_dom}
    \text{dom}\left(\mathcal{D}(x_i)\right)=
    \begin{cases}
    [-a, -a+\sqrt{a^2-\sigma}] & a\geq\sqrt{\sigma}\\
    [ -a-\sqrt{a^2-\sigma}, -a]& a\leq\sqrt{\sigma}\\
    \emptyset & |a|< \sqrt{\sigma}
\end{cases},
\end{equation}
where $\mathcal{D}(x_i)$ is the squared polynomial~\eqref{eq:hyp}, i.e.:
\begin{equation}\label{eq:quad_hyp}
    \mathcal{D}(x_i) = 4\gamma x_i\left(- a \gamma x_i^{2} +x_i(-a^2\gamma+1)- a \sigma \gamma \right).
\end{equation}
The main expression - $(4\gamma x_i)\neq 0$ - is a quadratic in $x_i$ of discriminant $\mathrm{discr}(\mathcal{D}(x_i))=(a^{2}\gamma - 1)^{2} - 4a^{2}\sigma\gamma^{2}$, which must be positive in order for the polynomial to admit real solutions. It is a quartic function in $a$, with the leading term $\gamma^2$ being positive and with three distinct roots of the expression $\mathrm{discr}(\mathcal{D}(x_i))^\prime=0$ being equal to $\left\{0, -\sqrt{\frac{2\sigma \gamma + 1}{\gamma}}, +\sqrt{\frac{2\sigma \gamma + 1}{\gamma}}\right\}$. By substituting the roots inside $\mathcal{D}(x_i)$ one obtain the ranges of positivity of the quartic, which are
\begin{equation}\label{eq:discri}
         a\leq a_1 \ \vee a_2\leq a \leq a_3\ \vee a\geq a_4,
\end{equation}
where $a_i=\pm\sqrt{\sigma+\frac{1}{\gamma}}\mp\sqrt{\sigma}
$ are the zeros of $\mathcal{D}(x_i)$ ordered according to the following sign assignments$(-,-)$, $(-,+)$, $ (+,-)$,$(+,+)$. Now that we have stated the intervals for which the hyperbolic branch admits two real roots~\eqref{eq:discri}, we must intersect such conditions with the stable hyperbolic solution domain~\eqref{dom:hyperb_dom}. To do that, we reverse the condition: find the intervals of $a$ for which the hyperbolic expression does not admit a stable FNE, i.e., the second term of~\eqref{eq:hyp} is negative. Let us take the domain expression $ -\sigma \gamma - \gamma x_i \left(2 a + x_i\right)$, substitute the solutions of the hyperbolic branch $x_\mathrm{h_1}, x_\mathrm{h_2}$ and verify for which value of $a$ it is not positive:
\begin{equation*}
    \begin{aligned}
        x_\mathrm{h_1}:\ \ \frac{- a^{4} \gamma^{2} +1  -\sqrt{\Delta(\gamma, \sigma, a)}(a^2\gamma + 1) }{2 a^{2} \gamma},\\
        x_\mathrm{h_2}:\ \ \frac{- a^{4} \gamma^{2} +1  +\sqrt{\Delta(\gamma, \sigma, a)}(a^2\gamma + 1) }{2 a^{2} \gamma}.
    \end{aligned}
\end{equation*}
Because both expressions are even w.r.t. $a$, we obtain that they are both negative for $|a|\leq\frac{1}{\sqrt{\gamma}}$. By intersecting the latter with the range of existence shown before~\eqref{eq:discri}, we exclude the middle interval $(a_2,a_3)$. In fact, one can observe that, by rewriting $a_3$ as $\sqrt{\frac{\gamma\sigma +1}{\gamma}}-\sqrt{\sigma}$, then the inequality $\sqrt{\frac{\gamma\sigma +1}{\gamma}}-\sqrt{\sigma}<\frac{1}{\sqrt{\gamma}}$ can be simplified by multiplying both sides first by $\sqrt{\gamma}>0$ and then by $\left(\sqrt{\gamma\sigma +1}+\sqrt{\gamma\sigma}\right)>0$, which yields to $\sqrt{\gamma\sigma+1}+\sqrt{\gamma\sigma}>1$ that once squared up becomes $\gamma\sigma + \sqrt{\gamma \sigma(\gamma\sigma+1)}>0$, which is ensured for every parameter choice. Considering that $a_2=-a_3$, then the inequality $a_2>-\frac{1}{\sqrt{\gamma}}$ is obtained by simple substitutions. Lastly, we exclude the cases of the discriminant being null, i.e., $a=a_{1,4}$, as they corresponds to the symmetric solution (see Lemma~\ref{lemma:symm_sqrt_d}) so they do not provide an additional stable FNE. 
$\hfill\qed$

\subsection{Proof of Theorem \ref{thm:saddle}}\label{appendix:saddle}
Let us start with $A(x_i, x_{-i})$ and observe that for every $x_i, x_{-i}$ in the stable domain~\eqref{eq:sFNE_dom} the two policies are either both negative or positive depending on the sign of $a$, which means that they are always concordant and so $A(x_i, x_{-i})>0\quad \forall x_i,x_{-i}$. Next, let us focus on $B(x_i, x_{-i})$ and observe that its sign depends on the product of the two functions $(x_{-i}-h_+^{(i)}(x_i))(x_i-h_+^{(-i)}(x_{-i}))$ which we will denote respectively as $B_1^{(i)}(x_i, x_{-i})$ and $B_2^{(-i)}(x_i, x_{-i})$. Let us focus on the first one and write it as $B_1^{(i)}(x_i, x_{-i})=a +x_{-i}+g^{(i)}(x_i)$, with $g^{(i)}(x_i)=  \frac{\gamma(x_i^2-\sigma_i)-\sqrt{m_1^{(i)}(x_i)}}{2 \gamma x_i}$, and $m_1^{(i)}(x_i)$ defined in~\eqref{A_B}. Let us focus on the expression of $g^{(i)}(x_i)$, consider $a>0$ and multiply both terms by the positive quantity $-2\gamma x_i$, as $x_i\in(-a,0)$; now observe that $\sqrt{m_1^{(i)}(x_i)}\geq \gamma (\sigma_i+x_i^2)$ and by algebraic substitutions obtain $-2\gamma x_i g^{(i)}(x_i)=\gamma(x_i^2-\sigma_i)-\sqrt{m_1^{(i)}(x_i)}\geq 2\gamma \sigma_i$, which implies that $g^{(i)}(x_i)\geq -\frac{\sigma_i}{x_i}$, which is a positive function in the negative half space of $\mathbb{R}$, in particular  for $x_i\in(-a,0)$. Thus, we have $\mathrm{sign}(g^{(i)}(x_i))=-\mathrm{sign}(x_i)>0$. Given that, let us go back to the expression of $B_1^{(i)}(x_i, x_{-i})$ and observe that 
\begin{equation}
        B_1^{(i)}(x_i, x_{-i}))=\underbrace{a+x_{-i}}_{>0}+\underbrace{g^{(i)}(x_i)}_{>0}>0
    \end{equation}
for every $x_i\in(-a,0)$. By iterating the same procedure for $a<0$ we obtain the opposite sign. The same reasoning holds for $B_2^{(-i)}(x_i, x_{-i})$. As a final result, we obtain the claim.
$\hfill\qed$

\subsection{Proof of Lemma~\ref{lemma:symm_sqrt_d}}
\label{appendix:symm_sqrt_d}
$(\Rightarrow)$ We will prove by contrapositive i.e. if $|x^\star|<\sqrt{\sigma}$ then $|a|<\sqrt{\sigma}+\sqrt{\sigma+\frac{1}{\gamma}}$. We start by writing $a$ as follows: let us take the symmetric branch~\eqref{eq:symm_branch} and solve it w.r.t. $a$, which yields $W_-(x_i)$, the negative branch of \eqref{eq:W_functional_a}, which has been proven to be a strictly monotone functional - in Theorem~\ref{thm:symmetric_unique_symm_FNE} - and it returns a unique solution to the equation $W_-(x_i^\star)=a$.  As we are dealing with a symmetric FNE, let us drop the index $i$ and let us rewrite $W_-(x)$ as the sum of $W_1(x)=\frac{(\sigma-3x^2)}{2x}$ and $W_2(x)=-\frac{1}{2x}\sqrt{(\sigma+x^2)^2+\frac{4x^2}{\gamma}}$. Let us assume $a>0$ and $x^\star\in(-a,0)\subset\mathbb{R}^-$ so the claim is proven if we show that $a_-(x^\star)<\sqrt{\sigma}+\sqrt{\sigma+\frac{1}{\gamma}}$ for $x^\star>-\sqrt{\sigma}$. Let us observe that $W_2(x)$ is the product of a negative term and the square root of a sum of positive terms; let us use the property $\sqrt{u+z}\leq\sqrt{u}+\sqrt{z}$ for all $u,z$ positive and imply that $W_2(x) < -\frac{1}{2x}\left(\sigma +x^2+\frac{2x}{\sqrt{\gamma}}\right)$, where we exclude the case of $x_i=0$ and so the inequality is proper. By substituting the latter inside the original expression of $W_-(x)$ we obtain
    \begin{equation}
        W_1(x)+W_2(x) < -2x-\frac{1}{\sqrt{\gamma}}<2\sqrt{\sigma}-\frac{1}{\sqrt{\gamma}},
    \end{equation}
where the last chain of relation is obtained by considering the assumption $x>-\sqrt{\sigma}$. Lastly, let us consider $2\sqrt{\sigma}-\frac{1}{\sqrt{\gamma}}<\sqrt{\sigma}+\sqrt{\sigma+\frac{1}{\gamma}}$ which, through algebraic manipulations, is equal to $\sqrt{\gamma\sigma}-1<\sqrt{\gamma\sigma+1}$ and let us observe that it always holds. By iterating the same procedure for $a<0$ and $x^\star\in(0, -a)\subset\mathbb{R}^+,\ x^\star>\sqrt{\sigma}$, we obtain the claim.
    
$(\Leftarrow)$ The implication is proven by changing the inequality sign from the previous proof. 
$\hfill\qed$

\subsection{Proof of Theorem \ref{thm:saddle_symm}}\label{appendix:saddle_symm}
    Let us consider the stable FNE $(x, x)$ of which we know from Theorem~\ref{thm:saddle} that it is a saddle equilibrium if and only if $C(x_i, x_{-i})$ is negative. Let us unpack its definition from~\eqref{eq:C} and drop the index $i$ as we are in the symmetric setting: $C(x)= \left(\sigma - x^{2}\right) D(x)$ with $D(x) = -(\sigma+x^2)^2 \sqrt{\gamma^2(\sigma+x^2)^2 + 4 \gamma x^{2}}+ \gamma (\sigma+x^2)^3 + 2 \sigma x^{2} + 6 x^{4}$. Let us focus on the term $D(x)$ and simplify the notation by introducing the variables 
    \begin{equation*}
        P(x) = (\sigma+x^2),\quad S(x) = \sqrt{\gamma^2P^2(x)+4\gamma x^2},
    \end{equation*}
    and rewrite 
    \begin{equation*}
        D(x)=P^2(x)\left[\gamma P(x) - S(x) \right]+2\sigma x^2+6x^4.
    \end{equation*}
    Let us take the first multiplication and rationalize $\gamma P(x) - S(x) = \frac{\gamma P^2(x)-S^2(x)}{S(x)+\gamma P(x)} = -\frac{4\gamma x^2}{S(x)+\gamma P(x)}$, so that we can factorize the full expression as $$D(x)=x^2\left[-\frac{4\gamma P^2(x)}{S(x)+\gamma P(x)} + 2\sigma +6x^2\right].$$ By definition of $S(x)$ we have $S(x)\geq \gamma P(x)$  which implies $-\frac{4\gamma P(x)^2}{S(x)+\gamma P(x)}\geq -2P(x)$; this allows us to obtain the following lower bound $\forall x:\ D(x)\geq x^2\left[-2P(x)+2\sigma+6x^2 \right] = x^2(4x^2)=4x^4\geq 0$. Thus, $D(x)$ is positive for all $x\neq 0$ and so the sign of $C(x)$ depends solely on $(\sigma - x^2)$, which switches sign to  negative (i.e., local saddle FNE) when $|x|\geq \sqrt{\sigma}$.
$\hfill\qed$

\subsection{Proof of Theorem~\ref{prop:value_func_order}}
\label{appendix:value_func_order}
 Let us assume throughout the whole proof $a>0$ as the procedure is the same for both signs, and let us first show that the ordering $x_\mathrm{h_1}<x_\mathrm{s}<x_\mathrm{h_2}$ holds. By definition, $x_\mathrm{h_1}<x_\mathrm{h_2}$. Consider by absurd that either $x_\mathrm{s}<x_\mathrm{h_1}$ or $x_\mathrm{s}>x_\mathrm{h_2}$. If the fist inequality is true,  then by definition of $x_\mathrm{h_1}$ in~\eqref{eq:hyp_FNE} we have 
    \begin{equation}\label{eq:abs_1}
        x_\mathrm{s}<-\frac{a}{2}+\frac{1}{2 \gamma a}-\frac{\sqrt{\Delta(\gamma, \sigma, a)}}{2\gamma a}<- \frac{a}{2}+\frac{1}{2 \gamma a}. 
    \end{equation} 
Now, let us recall that $x_\mathrm{s}$ is one of the three roots $x_1, x_\mathrm{s}, x_3$ of the cubic $\mathcal{C}(x)=0$ defined in~\eqref{eq:cubic}, which we know being ordered as $x_1 < x_\mathrm{s}< x_3$. In order to assess the inequality~\eqref{eq:abs_1} one shall verify that the sign of the cubic at $- \frac{a}{2}+\frac{1}{2 \gamma a}$ is consistent with the sign intervals of the expression: 
\begin{align*}
    \mathrm{sign}(\mathcal{C}(x))=\begin{cases}
    >0& x_\mathrm{1}<x<x_\mathrm{s}, x>x_\mathrm{3}>0\\
    <0& x>x_\mathrm{1}, x_\mathrm{s}<x<x_\mathrm{3} 
\end{cases}.
\end{align*}
To do that, let us compute the cubic at $-\frac{a}{2}+\frac{1}{2 \gamma a}<0$, which is negative because, as proven in~\eqref{eq:conditions_multiplicity}, otherwise there would be a single FNE. Moreover, we know that $x_3>0$ because $\mathcal{C}(0)=-\gamma \sigma a<0$. Hence, if $\mathcal{C}(-\frac{a}{2}+\frac{1}{2 \gamma a})>0$ we obtain a contradiction, as that would imply $x_\mathrm{s}>x_{\mathrm{h}_1}.$
By simple algebraic manipulations,  $\mathcal{C}(-\frac{a}{2}+\frac{1}{2 \gamma a})=\frac{\Delta(\gamma, \sigma, a)}{4\gamma a^3}$, with $\Delta(\gamma, \sigma, a)$ from \eqref{eq:hyp_FNE} being the quartic on $a$ for which the game admits hyperbolic FNE. As a consequence, we have $\mathcal{C}(-\frac{a}{2}+\frac{1}{2 \gamma a})>0$ and $-\frac{a}{2}+\frac{1}{2 \gamma a}<0$ which implies, according to the sign of $\mathcal{C}(x)$, that inequality \eqref{eq:abs_1} cannot hold. 
Thus, since we just proved $x_\mathrm{s}\not<x_\mathrm{h_1}$,
let us assume by absurd that the second case holds, i.e., $x_\mathrm{s}>x_\mathrm{h_2}$. We can apply the same reasoning also for this case: we compute the cubic at $x_\mathrm{h_2}$ and obtain through simple computations that $\mathcal{C}\left(x_\mathrm{h_2}\right)>0$, which, by the same reasoning seen so far, results once more in a contradiction. As a consequence, the ordering given in~\eqref{eq:ordering} holds.

Let us now focus on the expression of the value function~\eqref{eq:value_function} and observe that $x_\mathrm{h_1}+x_\mathrm{h_2}=-a+\frac{1}{\gamma a}$ which implies that the closed-loop values of the two hyperbolic FNE are equal to $a_\mathrm{cl}(x_\mathrm{h_1}, x_\mathrm{h_2})=a_\mathrm{cl}(x_\mathrm{h_2}, x_\mathrm{h_1})=a_\mathrm{cl}(x_h)=\frac{1}{\gamma a}>0$. On the other hand, let us take the  closed-loop value of the symmetric setting $a_\mathrm{cl}(x_\mathrm{s})=a+2 x_\mathrm{s}>0$ and observe that $a_\mathrm{cl}(x_h)< a+2 x_\mathrm{s}$ if and only if $x_\mathrm{s}>\frac{1}{2\gamma a}-\frac{a}{2}$, which has already been proven to be true in~\eqref{eq:abs_1}. Thus, let us square both terms of inequality $a_\mathrm{cl}(x_h)<a_\mathrm{cl}(x_\mathrm{s})$ and via simple manipulations obtain
    \begin{equation}\label{eq:bound_closed_loop}
        \frac{1}{1-\gamma a_\mathrm{cl}^2(x_h)}\leq \frac{1}{1-\gamma a_\mathrm{cl}^2(x_\mathrm{s})}.
    \end{equation}
    Let us multiply both terms by the positive quantity $s_0^2 r(\sigma +x_\mathrm{s}^2)$ and observe that $\sigma +x_\mathrm{h_2}^2\leq \sigma +x_\mathrm{s}^2$ as one can obtain the squared ordering $x_\mathrm{h_1}^2\geq x_\mathrm{s}^2 \geq x_\mathrm{h_2}^2$ from~\eqref{eq:ordering} by recalling that every term of~\eqref{eq:ordering} is negative ad so the squared inequality changes direction. As a result, we obtain the first inequality in the claim, i.e., $
        V^{(i)}(s_0; x_\mathrm{h_2}, x_\mathrm{h_1})\leq V^{(i)}(s_0; x_\mathrm{s}, x_\mathrm{s})$:
    \begin{equation}
        \underbrace{\frac{s_0^2 r(\sigma +x_\mathrm{h_2}^2)}{1-\gamma a_\mathrm{cl}^2(x_h)}}_{=V^{(i)}(s_0; x_\mathrm{h_2}, x_\mathrm{h_1})}\leq \frac{s_0^2 r(\sigma +x_\mathrm{s}^2)}{1-\gamma a_\mathrm{cl}^2(x_h)}\leq \underbrace{\frac{s_0^2 r(\sigma +x_\mathrm{s}^2)}{1-\gamma a_\mathrm{cl}^2(x_\mathrm{s})}}_{=V^{(i)}(s_0; x_\mathrm{s}, x_\mathrm{s})}. 
    \end{equation}
    We prove next the second inequality in the claim, i.e., $V^{(i)}(s_0; x_\mathrm{s}, x_\mathrm{s})\leq V^{(i)}(s_0;x_\mathrm{h_1}, x_\mathrm{h_2})$. Consider the cubic $\mathcal{C}(x)$ from~\eqref{eq:cubic}, which has one root equal to $x_\mathrm{s}$, and solve it with respect to $\sigma$ to obtain $$\sigma= \frac{x_{s} \left(a^{2} \gamma + 3 a \gamma x_{s} + 2 \gamma x_{s}^{2} - 1\right)}{\gamma \left(a + 2 x_{s}\right)}.$$ By substituting this in $\sigma+x_\mathrm{s}^2$, after few simplifications we obtain $$\sigma+x_\mathrm{s}^2=x_\mathrm{s}\frac{\gamma(a+2x_\mathrm{s})^2-1}{\gamma (a+2x_\mathrm{s})}=x_\mathrm{s}\frac{\gamma a_\mathrm{cl}^2(x_\mathrm{s})-1}{\gamma a_\mathrm{cl}(x_\mathrm{s})}.$$ 
    As a consequence, the value function expression of the symmetric FNE is equal to 
    \begin{equation}
        V^{(i)}(s_0; x_\mathrm{s}, x_\mathrm{s})=-\frac{s_0^2x_\mathrm{s}}{\gamma a_\mathrm{cl}(x_\mathrm{s})}>0.
    \end{equation}
    Let us take the quadratic of the hyperbolic FNE \eqref{eq:quad_hyp}, solve it with respect to $\sigma$, which gives $\sigma=- a x_{h1} - x^2_{h1} + \frac{x_{h1}}{a \gamma}$, and substitute it in $\sigma+x^2_{h1}=- a x_{h1} + \frac{x_{h1}}{a \gamma}$. Therefore, one obtains that the value function of the hyperbolic FNE is simplified to 
    \begin{equation}
        V^{(i)}(s_0; x_\mathrm{h1}, x_\mathrm{h2})=-s_0^2 x_\mathrm{h1}a>0.
    \end{equation}
    In order to obtain the claim, let us consider the inequality $\frac{1}{\gamma a_\mathrm{cl}(x_\mathrm{s})}<a\iff a_\mathrm{cl}(x_\mathrm{s})>\frac{1}{\gamma a}$ and substitute $a_\mathrm{cl}(x_\mathrm{s})=a+2x_\mathrm{s}$, which implies $x_\mathrm{s}>\frac{1}{2\gamma a}-\frac{a}{2}$, as proved in the earlier steps of the proof, see~\eqref{eq:abs_1}. Thus, let us multiply both terms of $\frac{1}{\gamma a_\mathrm{cl}(x_\mathrm{s})}<a$ by $-s_0^2x_\mathrm{s}>0$ to obtain
    \begin{equation}
        \underbrace{\frac{-s_0^2x_\mathrm{s}}{\gamma a_\mathrm{cl}(x_\mathrm{s})}}_{=V^{(i)}(s_0;s_\mathrm{s}, x_\mathrm{s})}<-s_0^2x_\mathrm{s}a<\underbrace{-s_0^2x_{h1}a}_{=V^{(i)}(s_0; x_\mathrm{h_1}, x_\mathrm{h_2})},
    \end{equation} 
    which is our claim.
    $\hfill\qed$

\bibliographystyle{IEEEtran}  
\bibliography{bib} 
\nocite{*}
\end{document}